\theoremstyle{plain}
\newtheorem{theorem}{\hskip\parindent Theorem}
\newtheorem{lemma}{\hskip\parindent Lemma}
\newtheorem{corollary}{\hskip\parindent Corollary}
\theoremstyle{definition}
\newtheorem{remark}{\hskip\parindent Remark}
\numberwithin{equation}{section}
\newcommand\Ai{\operatorname{Ai}}
\newcommand\Bi{\operatorname{Bi}}
\renewcommand\Re{\operatorname{Re}}
\renewcommand\Im{\operatorname{Im}}
\newcommand\sgn{\operatorname{sgn}}
\begin{document}

%\title{Classification of real Painlev\'{e} I solutions and asymptotics of their large positive zeros}

\title{Classification of the real Painlev\'{e} I transcendents
 by zeros and connection problem: an asymptotic study}

\author[$\dag$]{Yan Huang}

\author[$\ddag$]{Yu-Tian Li}

\author[$\dag$]{Wen-Gao Long \thanks{Corresponding author: longwg@hnust.edu.cn}}

\affil[$\dag$]{\small School of Mathematics and Statistics,
Hunan University of Science and Technology, Xiangtan, Hunan, 411201, PR China}

\affil[$\ddag$]{\small
School of Science and Engineering, The Chinese University of Hong Kong, Shenzhen, Guangdong, 518172, PR China.}

\date{}

\maketitle

\begin{abstract}

In this paper, we study the asymptotic behavior and connection problem of  Painlev\'e I (PI) equation through a detailed analysis of the Stokes multipliers associated with its solutions. Focusing on the regime where the derivative at the real zeros of the solution becomes large, we apply the complex WKB method to derive full asymptotic expansions of the Stokes multipliers. These expansions allow us to classify real solutions of PI according to their behavior at the zeros, distinguishing between oscillatory, separatrix, and singular types solutions on the negative real axis. Furthermore, we resolve the connection problem between the large negative asymptotics and the location of positive zeros by establishing full asymptotic expansions of the zero parameters. Our approach enables the construction of a precise phase diagram in the
$(r,b)$-plane, where $r$ is the location of a zero and $b$ is the derivative at that point. Numerical simulations are provided to validate the theoretical results. This work extends prior studies on monodromy asymptotics and contributes a comprehensive framework for understanding the global structure of real PI solutions through their local zero data.

%
%Applying the complex WKB method, we study the full asymptotic expansion of the monodromy data ({\it i.e.}, Stokes multipliers) for Painlev\'{e} I  (PI) equation when the derivative at the zeros of the solution become large. As an application, we give an asymptotical classification for the real PI solutions with respect to the zero parameters (the location of the zeros and the corresponding derivatives).  We also provide the full asymptotic expansions of the location of the $n$-th zero for all the PI solutions as $n\to+\infty$, solving the connection problem of PI equation between negative and positive infinity. Numerical simulations are carried out to verify our main results.

\end{abstract}

\vspace{5mm}
{\it Keywords:} Painlev\'{e} I transcendent, zeros,
connection formula, Stokes multiplier, uniform asymptotics

\vspace{5mm}
{\it MSC2020:} 34M40, 33E17, 34A12, 34E05, 33C10

\section{Introduction}
The six Painlev\'e equations (PI–PVI) originated from early 20th-century efforts by Paul Painlev\'e and his collaborators to classify second-order ordinary differential equations whose solutions are free from movable singularities other than poles -- a criterion now known as the \emph{Painlev\'e property}. This property ensures that the solutions define new transcendental functions, known as \emph{Painlev\'e transcendents}, which cannot be expressed in terms of classical special functions.
Each equation in this family exhibits distinctive characteristics and arises in a variety of physical contexts, ranging from statistical mechanics to quantum gravity~\cite{Ablowitz03,BMW1973,EFIK1996,JMMS1980,Kan2002,McCoy1992, TW2011,WMTB1976}.

Among them, the first Painlev\'e equation (PI),
\begin{equation}
\label{PI equation}
\frac{d^{2}y}{dt^{2}} = 6y^2 + t,
\end{equation}
is notable for its simplicity and the rich structure of its solutions.
%PI appears in the study of critical phenomena, such as the description of universal edge statistics in random matrix theory and the analysis of gradient catastrophes in nonlinear wave equations. Moreover, PI plays a pivotal role in the double-scaling limit of matrix models, which is fundamental in two-dimensional quantum gravity and string theory.
The PI equation exhibits intricate asymptotic behaviors and connection problems, which have been the subject of extensive mathematical investigation. The pioneering asymptotic analyses were conducted by Joshi and Kruskal~\cite{Joshi-Kruskal-1992} and Kapaev and Kitaev~\cite{Kapaev-Kitaev-1993}. As $t \to \infty$, solutions of PI display diverse asymptotic expansions -- including oscillatory and pole-free behaviors -- depending on the sector in the complex plane. Understanding these behaviors is essential for uncovering the global properties of PI solutions.

A central aspect of the analysis of PI involves the connection problem: relating local properties of the solution (such as initial data or pole distribution) to its global asymptotic behavior at infinity. As emphasized on multiple occasions by Clarkson~\cite{CPA2003,CPA2006}, the connection problem -- particularly in the context of the initial value problem -- remains an open question. More recent studies~\cite{Bender-Komijani-2015, Bender-Komijani-Wang, LongLi, LongLiWang} have continued to develop this area, although several key aspects remain unresolved.

The zeros of PI solutions are of particular interest in solving boundary value problems, as demonstrated in the work of Joshi and Kitaev~\cite{Joshi-Kitaev-2005}. In this paper, we aim to further investigate the connection problem by establishing the relationship between the behavior of solutions near their zeros and their asymptotic behavior as $t \to -\infty$, which we briefly outline below.

\medskip

Monodromy theory plays a central role in analyzing the asymptotic behavior and connection problems of Painlev\'e transcendents. For the first Painlev\'e equation (PI), there exists a one-to-one correspondence between its solutions and associated Stokes multipliers: a pair of Stokes multipliers uniquely determines a PI solution, and vice versa. Detailed information on the monodromy theory for PI and the definition of the Stokes multipliers is provided in Appendix~\ref{sec:AppA}. Kapaev~\cite{AAKapaev-1988} studied the asymptotic behavior of $y(t)$ as $t \to -\infty$ and classified the PI solutions into three distinct types:

\begin{enumerate}[(A)]
\item a two-parameter family of solutions, oscillating about the parabola $y=-\sqrt{-t/6}$ and satisfying
\begin{equation}\label{eq-behavior-type-A}
y=-\sqrt{-\frac{t}{6}}+d\,(-t)^{-\frac{1}{8}}\cos{\left[24^{\frac{1}{4}}\left(\frac{4}{5}(-t)^{\frac{5}{4}}-\frac{5}{8}d^2 \log(-t)+\varphi\right)\right]}+\mathcal{O}\left (t^{-\frac{5}{8}}\right )
\end{equation}
as $t\rightarrow-\infty$, where
    \begin{equation}\label{eq-parameter-d-theta}
  \left\{\begin{aligned}
  &24^{\frac{1}{4}}d^{2}=-\frac{1}{\pi}\log{|s_{0}|},\\
  &24^{\frac{1}{4}}\theta=\arg{s_{2}}-24^{\frac{1}{4}}d^{2}\left(\frac{19}{8}\log{2}+\frac{5}{8}\log{3}\right)-\frac{\pi}{4}-\arg\Gamma\left(-i\frac{24^{\frac{1}{4}}}{2}d^{2}\right);
\end{aligned}\right.
\end{equation}
\item a one-parameter family of solutions (termed {\it separatrix solutions}), satisfying
    \begin{equation}\label{eq-behavior-type-B}
      y(t)=y_{0}(t)-\frac{h}{4\sqrt{\pi}}24^{-\frac{1}{8}}(-t)^{-\frac{1}{8}}\exp\left\{-\frac{4}{5}24^{\frac{1}{4}}(-t)^{\frac{5}{4}}\right\}\left(1+\mathcal{O}\left(|t|^{-\frac{5}{4}}\right)\right)
    \end{equation}
    as $t\rightarrow-\infty$, where $y_{0}(t)=\sqrt{-t/6}\left[1+\mathcal{O}\left((-t)^{-\frac{5}{2}}\right)\right]$ and
    \begin{equation}\label{eq-h-s1-s-1}
    h=s_{1}-s_{-1};
    \end{equation}
\item a two-parameter family of solutions, having infinitely many double poles on the negative real axis and satisfying
    \begin{equation}\label{eq-behavior-type-C}
      \frac{1}{y(t)+\sqrt{{-t}/{6}}}\sim \frac{\sqrt{6}}{2}\sin^{2}\left\{\frac{2}{5}24^{1/4}(-t)^{\frac{5}{4}}+\frac{5}{8}\rho\log(-t)+\sigma\right\}
    \end{equation}
    as $t\rightarrow-\infty$, where
    \begin{equation}\label{eq-parameter-rho-sigma}
    \left\{\begin{aligned}
    \rho&=\frac{1}{2\pi}\log(|s_{2}|^{2}-1)=\frac{1}{2\pi}\log(|1+s_{2}s_{3}|)=\frac{1}{2\pi}\log|s_{0}|,\\
    \sigma&=\frac{19}{8}\rho\log{2}+\frac{5}{8}\rho\log{3}+\frac{1}{2}\arg\Gamma\left(\frac{1}{2}-i\rho\right)+\frac{\pi}{4}+\frac{1}{2}\arg{s_{2}}.
    \end{aligned}\right.
    \end{equation}
\end{enumerate}
In the above formulas, the quantities $s_k$ denote the Stokes multipliers, which will be explicitly defined later in Eq.~\eqref{eq-Stokes-matrices}. The classification of the three solution types is based on the following conditions:
\begin{equation}\label{eq-classifed-by-stokes}
\begin{cases}
\text{Type A (oscillatory solutions)}: & \Im s_{0}=1+s_{2}s_{3}>0;\\
\text{Type B (separatrix solutions)}: & \Im s_{0}=1+s_{2}s_{3}=0;\\
\text{Type C (singular solutions)}:  & \Im s_{0}=1+s_{2}s_{3}<0.
\end{cases}
\end{equation}
However, obtaining explicit expressions for the Stokes multipliers $s_k$ at fixed values of $t$ (e.g., $t = 0$) or as $t \to +\infty$ remains a significant challenge. This difficulty highlights the unresolved nature of the connection problem for the first Painlev\'e equation, as emphasized by Clarkson.

Rather than attempting to derive explicit expressions for the Stokes multipliers, Long and Li~\cite{LongLi} investigated their leading-order asymptotic behavior when either $y(0)$ or $y'(0)$ becomes large. They provided an asymptotic classification of the three types of PI solutions in terms of their initial data and derived certain limiting connection formulas. More recently, Long, Li, and Wang~\cite{LongLiWang} studied the connection problem for the PI equation between poles and large negative values of $t$, analyzing the asymptotic behavior of the Stokes multipliers as the pole parameters tend to infinity.

\medskip

In this paper, we investigate the connection problem for the first Painlev\'e equation (PI), focusing on the relationship between the three types of asymptotic behavior described in Eqs.~\eqref{eq-behavior-type-A}, \eqref{eq-behavior-type-B}, and \eqref{eq-behavior-type-C}, and the initial data near real zeros of the solution. Suppose $t = r$ is a real zero of a given PI solution. Then, the local behavior near $r$ is described by
\begin{equation}\label{eq-behavior-at-zero}
y(t) =b(t-r) + \frac{r}{2}(t-r)^2+\mathcal{O}((t-r)^{3}), \qquad\text{as}~ t \to r,
\end{equation}
where $b=y'(r)$. The pair $(r, b)$ uniquely determines the solution.

Our main objective is to compute the asymptotic expansion of the Stokes multipliers in the regime where at least one of the parameters $r$ or $b$ becomes large. This work can be seen as an extension of the results in~\cite{LongLi}, where the authors studied the asymptotics of the Stokes multipliers $s_k$ under the assumption that either $y(0)$ or $y'(0)$ is large. According to monodromy theory, the Stokes multipliers $s_k$ can, in general, be regarded as functions of the triple $(t, y(t), y'(t))$. Consequently, it is natural to ask how the Stokes multipliers behave under the condition $y(t) = 0$, particularly when either $t$ or $y'(t)$ is large.

One motivation for treating $s_k$ as functions of $r$ and $b$, and for studying their asymptotic behavior in this setting, is the possibility of classifying PI solutions in terms of the parameters $(r, b)$. Inspired by~\cite{LongLiWang}, we focus on the regime in which $b$ is large while $r$ remains arbitrary. Under this assumption, we are able to derive an asymptotic description of the phase diagram of PI solutions in the $(r, b)$-plane.

A second motivation arises from~\cite{Long-Jiang-Li}, in which the authors obtained large-$n$ asymptotics for the $n$-th pole of the real tritronqu\'ee solution. A natural parallel question concerns the behavior of the $n$-th zero of this particular solution as $n \to +\infty$. In this paper, we aim to derive analogous results for all real PI solutions, noting that relatively little is currently known about the distribution of their zeros on the positive real axis. Addressing this question requires a detailed asymptotic analysis of the Stokes multipliers in the limit where either $r$ or $b$ becomes large.

The remainder of this paper is organized as follows. In Section~\ref{sec:complex}, we apply the complex WKB method \cite{APC, Dunster-2014, Kawai-Takei-2005-book} to derive higher-order asymptotic expansions of the Lax pair as $b \to \pm\infty$. Section~\ref{sec:results} presents our main results, which are structured into three parts:
(1) full asymptotic expansions of the Stokes multipliers;
(2) classification of real PI solutions in terms of the zero parameters; and
(3) full asymptotic expansions for the locations of large zeros on the positive real axis.
This section also includes the proofs and numerical validations of the results in parts (2) and (3). In Section~\ref{sec:proof-theorem}, we provide the proof of part (1).

\section{Higher-order approximations of the Lax pair}\label{sec:complex}

Consider Eq.~\eqref{eq-system-hat-Phi} as a linear system of ODEs for $\widehat{\Phi} = (\phi_1, \phi_2)^T$, where $T$ denotes the transpose. Then, $\phi_{1}$ satisfies the following second-order ordinary differential equation:
\begin{equation}\label{Schrodinger-equation-t-general}
\frac{d^{2}\phi_{1}}{d\lambda^{2}}=\left[y_{t}^{2}+4\lambda^{3}+2\lambda t-2y t-4y^{3}-\frac{y_{t}}{\lambda-y}+\frac{3}{4}\frac{1}{(\lambda-y)^2}\right]\phi_{1}.
\end{equation}
Let $t\to r$, where $r$ is a zero of the PI solution. Then the above equation reduces to
\begin{equation}\label{Schrodinger-equation-reduced-t=a}
\frac{d^{2}\phi_{1}}{d\lambda^{2}}=\left[4\lambda^{3}+2\lambda r+b^2-\frac{b}{\lambda}+\frac{3}{4\lambda^2}\right]\phi_{1},
\end{equation}
where $b=y'(r)$.

Introduce a large parameter $\xi\to+\infty$ with
\begin{equation}\label{para-a-b-xi}
r=2A\xi^{\frac{4}{5}} \quad\text{and}\quad
b^2=4\xi^{\frac{6}{5}}\left[1+\sum\limits_{s=1}^{\infty}\frac{B_{s}}{\xi^{s}}\right],
\end{equation}
where $A, B_{s}\in\mathbb{R}$ are real and bounded for every $s\in\mathbb{N}$.
Under the scaling $\lambda=\xi^{2/5}z$ and $Y(z)=\phi(\xi^{2/5}z)$, Eq. \eqref{Schrodinger-equation-t-general} becomes
\begin{equation}\label{second-order-equation-w}
\frac{d^{2}Y}{dz^2}
=\xi^2 \left[f_{0}(z)+\sum\limits_{s=1}^{\infty}\frac{f_{s}(z)}{\xi^{s}}\right]Y
:=\xi^2 F(z,\xi) Y
\end{equation}
with
\begin{equation}
\begin{split}
f_{0}(z)=4(z^3+Az+1),\quad f_{1}(z)=-\frac{2\cdot\sgn(b)}{z}+4B_{1}, \quad
f_{2}(z)&=\frac{3}{4z^2}+4B_{2}-\frac{B_{1}}{z},
\end{split}
\end{equation}
and
\begin{equation}\label{eq-def-fs}
f_{s}(z)=4B_{s}-\frac{2g_{s}(\sgn(b), B_{1},\cdots,B_{s-1})}{z}, \quad s=3,4,\cdots,
\end{equation}
where $g_{s}$ is a polynomial in $\sgn(b), B_{1}, \ldots, B_{s-1}$, and $g_{s} = 0$ if all the coefficients $B_{j} = 0$ for $1 \leq j < s$.

Let $z_{i}$, $i = 0, 1, 2$, denote the roots of the cubic polynomial $z^{3} + Az + 1 = 0$, where $\Re z_{0} < 0$, and the other two roots $z_{1}, z_{2}$ are either both real with $\Re z_{1} \geq \Re z_{2}$, or form a complex conjugate pair with $\Im z_{1} \geq \Im z_{2}$. Define
\begin{equation}\label{eq-def-kappa2-hat-kappa2}
\kappa^2:=\kappa^{2}(A)=\frac{2}{\pi i}\int_{z_{1}}^{z_{2}}f_{0}(s)^{\frac{1}{2}}ds
\quad \text{and}\quad
\widehat{\kappa}^2:=\widehat{\kappa}^{2}(A)=\frac{2}{\pi i}\int_{z_{0}}^{z_{1}}f_{0}(s)^{\frac{1}{2}}ds.
\end{equation}
Here and throughout, the branch cuts are chosen such that $\arg(z - z_{0}) \in (-\pi, \pi)$ and $\arg(z - z_{i}) \in \left(-\frac{\pi}{2}, \frac{3\pi}{2}\right)$ for $i = 1, 2$.

According to \cite[Lemma 2.1]{LongLiWang}, we have,
\begin{equation}
\kappa^{2}\begin{cases}>0, \quad & A<-3/2^{\frac{2}{3}},\\
=0, \quad & A=-3/2^{\frac{2}{3}},\\
<0, \quad & A>-3/2^{\frac{2}{3}}.
\end{cases}
\end{equation}
Moreover, from the same lemma, there exists a constant $C_{0}\approx 2.004 860 503 264 124$ such that
\begin{equation}\label{Imkappa2}
\Im\widehat{\kappa}^2\begin{cases}>0, \quad & A<C_{0},\\
=0, \quad & A=C_{0},\\
<0, \quad & A>C_{0}.
\end{cases}
\end{equation}

When $A\neq -3/2^{\frac{2}{3}}$, define a transformation $\zeta:=\zeta(z)$ by
\begin{equation}
\label{def-zeta}
\frac{2}{3}\zeta^{\frac{3}{2}}=\int_{z_{1}}^{z}f_{0}(s)^{\frac{1}{2}}ds.
\end{equation}
Then $\zeta(z)$ defines a conformal mapping in a neighborhood of $z = z_{1}$ and along the two adjacent Stokes lines $\{\arg{\zeta} = \frac{\pi}{3}, \pi\}$ emanating from $z = z_{1}$ to infinity. Moreover, a direct computation of the integral in~\eqref{def-zeta} yields the asymptotic expansion
\begin{equation}\label{eq-asymp-relation-zeta-z}
\frac{2}{3}\zeta^{\frac{3}{2}}=\frac{4}{5}z^{\frac{5}{2}}+2Az^{\frac{1}{2}}+\alpha_{0}+\mathcal{O}(z^{-\frac{1}{2}})
\end{equation}
as $z \to \infty$, uniformly for $\arg z \in \left[0, \frac{4\pi}{5}\right]$, where
\begin{equation}\label{eq-def-alpha0}
\alpha_{0}=2\int_{z_{1}}^{\infty e^{i\theta}}\left[(s^3+As+1)^{\frac{1}{2}}-(s^{\frac{3}{2}}+\frac{A}{2}s^{-\frac{1}{2}})\right]ds-\left(\frac{4}{5}z_{1}^{\frac{5}{2}}+2Az_{1}^{\frac{1}{2}}\right)
\end{equation}
with $\theta \in \left[0, \frac{4\pi}{5}\right]$. Comparing~\eqref{eq-def-alpha0} with~\eqref{eq-def-kappa2-hat-kappa2}, we find that
\begin{equation}
\Re\alpha_{0}=\frac{\pi}{2}\Im\widehat{\kappa}^2,\qquad
\Im \alpha_{0}=\frac{\pi}{4}\kappa^2.
\end{equation}

Define
\[
W(\zeta,\xi)=\left(\frac{f_{0}(z)}{\zeta}\right)^{\frac{1}{4}}Y,
\]
then Eq.~\eqref{second-order-equation-w} transforms into
\begin{equation}\label{second-order-equation-W}
\frac{d^{2}W}{d\zeta^2}=\xi^2 \left[\zeta+\sum\limits_{s=1}^{\infty}\frac{\varphi_{s}(\zeta)}{\xi^{s}}\right]W,
\end{equation}
where
\begin{equation}\label{eq-def-varphis}
\begin{split}
\varphi_{1}(\zeta)&=\frac{\zeta f_{1}(z)}{f_{0}(z)},\\
\varphi_{2}(\zeta)&=\frac{5}{16\zeta^2}+\frac{\zeta[4f_{0}(z)f_{0}''(z)-5f_{0}'(z)^2]}{16f_{0}(z)^3}+\frac{\zeta f_{2}(z)}{f_{0}(z)},\\
\varphi_{s}(\zeta)&=\frac{\zeta f_{s}(z)}{f_{0}(z)}, \qquad s=3,4,\cdots
\end{split}
\end{equation}
are all analytic at $\zeta=0$.

Similar to \cite[Lemma 1]{Long-Jiang-Li}, we state the following result.
\begin{lemma}\label{lem-higher-order-approximation-ODE-1}
Let $\delta$ and $\epsilon$ be fixed small positive constants, and define
\[
\mathbb{S}:=\{\zeta\in\mathbb{C}~:~\arg{\zeta}=\frac{k\pi}{3},~ k=-1, 1,3\}.
\]
For any $n\in\mathbb{N}$, define
\begin{equation}\label{eq-def-An}
\widehat{\zeta}=\zeta+\mathcal{A}_{n}(\zeta,\xi),\quad\text{where}\quad
\mathcal{A}_{n}(\zeta,\xi)=\sum\limits_{s=1}^{2n}\frac{a_{s}(\zeta)}{\xi^{s}}
\end{equation}
with each $a_{s}(\zeta)$ analytic in a domain $\mathbb{D}$ containing $\mathbb{S}$. Suppose that
\begin{equation}\label{eq-An-coeff-compare}
\xi^2\left\{\mathcal{A}_{n}+(\zeta+\mathcal{A}_{n})(2+\mathcal{A}'_{n})\mathcal{A}'_{n}\right\}+\frac{3{\mathcal{A}''_{n}}^{2}-2(1+\mathcal{A}'_{n})\mathcal{A}'''_{n}}{4(1+\mathcal{A}'_{n})^2}=\xi^2\sum\limits_{s=1}^{2n}\frac{\varphi_{s}(\zeta)}{\xi^{s}}+\mathcal{O}(\xi^{-2n+1})
\end{equation}
as $\xi\to+\infty$, uniformly for all $\zeta\in\mathbb{D}$.
Then, for each $s\in\mathbb{N}$, the limits
\begin{equation}\label{eq-def-alpha-s}
\alpha_{s}:=\alpha_{s}(A,B_{1},\cdots, B_{s})=\lim\limits_{\zeta\to\infty}\zeta^{\frac{1}{2}}a_{s}(\zeta)
\end{equation}
exist and depend only on $A$ and $B_{1}, \ldots, B_{s}$.
Moreover, for any solution $W(\zeta, \xi)$ of Eq.~\eqref{second-order-equation-W},
there exist constants $C_{1}$ and $C_{2}$ such that
\begin{equation}\label{eq-higher-approximation-ODE-1}
W(\zeta,\xi)=\left[C_{1}+r_{1}(\zeta,\xi)\right]\Ai_{n}(\zeta,\xi)
+\left[C_{2}+r_{2}(\zeta,\xi)\right]\Bi_{n}(\zeta,\xi),
\end{equation}
where
\begin{equation}
\Ai_{n}(\zeta,\xi)=\left(\frac{d\widehat{\zeta}}{d\zeta}\right)^{-\frac{1}{2}}\Ai\left(\xi^{\frac{2}{3}}\widehat{\zeta}\right)\quad \text{ and }\quad \Bi_{n}(\zeta,\xi)=\left(\frac{d\widehat{\zeta}}{d\zeta}\right)^{-\frac{1}{2}}\Bi\left(\xi^{\frac{2}{3}}\widehat{\zeta}\right),
\end{equation}
and $r_{1,2}(\zeta,\xi)=\mathcal{O}(\xi^{-2n})$
as $\xi\to+\infty$, uniformly for all $\zeta$ in a neighborhood of $\mathbb{S}$.
\end{lemma}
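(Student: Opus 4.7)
The plan is to construct the polynomial expansion $\mathcal{A}_n(\zeta,\xi)$ by recursively matching powers of $\xi^{-1}$ in \eqref{eq-An-coeff-compare}, then use the Liouville substitution $W=(1+\mathcal{A}_n')^{-1/2}\widetilde W(\widehat\zeta)$ to reduce \eqref{second-order-equation-W} to a perturbed Airy equation whose remainder is of order $\xi^{-2n+1}$, and finally apply Olver's successive-approximation technique via a Volterra integral equation to obtain \eqref{eq-higher-approximation-ODE-1} with $r_{1,2}=\mathcal{O}(\xi^{-2n})$.

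For the first step, I would substitute $\mathcal{A}_n=\sum_{s=1}^{2n}a_s/\xi^s$ into \eqref{eq-An-coeff-compare} and collect the coefficient of $\xi^{2-s}$. Since the Schwarzian-type bracket on the left of \eqref{eq-An-coeff-compare} begins contributing only at order $\xi^{-1}$ (i.e.\ for $s\geq 3$), each $a_s$ is forced to satisfy a linear first-order ODE
\begin{equation*}
a_s(\zeta)+2\zeta\,a_s'(\zeta)=\varphi_s(\zeta)-R_s(\zeta),
\end{equation*}
where $R_s$ is a polynomial differential expression in $a_1,\ldots,a_{s-1}$ with $R_1\equiv 0$. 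Using the integrating factor $\zeta^{1/2}$, the unique solution analytic at $\zeta=0$ is
\begin{equation*}
a_s(\zeta)=\frac{1}{2\sqrt{\zeta}}\int_0^{\zeta}\frac{\varphi_s(u)-R_s(u)}{\sqrt{u}}\,du,
\end{equation*}
which is well-defined because $\varphi_s$ is analytic at $\zeta=0$ by \eqref{eq-def-varphis} (the prefactor $\zeta/f_0(z)$ neutralizes the $1/z$ singularities in $f_s$, since $f_0(0)=4\neq 0$), and $R_s$ inherits analyticity inductively. Analytic continuation then extends $a_s$ along any path in $\mathbb{D}$ avoiding the other zeros of $f_0$.

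To establish that $\alpha_s=\lim_{\zeta\to\infty}\sqrt{\zeta}\,a_s(\zeta)$ exists, I would use \eqref{eq-asymp-relation-zeta-z} to deduce $z\sim (\tfrac{6}{5})^{2/3}\zeta^{3/5}$ at infinity, so that $\varphi_s(\zeta)=\zeta f_s(z)/f_0(z)=\mathcal{O}(\zeta^{-2/5})$; an induction on $s$, propagating the resulting bound $a_j=\mathcal{O}(\zeta^{-1/2})$ through the polynomial $R_s$, shows that $(\varphi_s-R_s)/\sqrt{u}$ is absolutely integrable on $[1,\infty)$, whence
\begin{equation*}
\alpha_s=\tfrac{1}{2}\int_0^{\infty}\frac{\varphi_s(u)-R_s(u)}{\sqrt{u}}\,du
\end{equation*}
is finite. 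Dependence only on $A,B_1,\ldots,B_s$ follows from the recursive structure, since $\varphi_s$ involves $B_s$ linearly through $4B_s$ and uses $B_1,\ldots,B_{s-1}$ only through $g_s$ in \eqref{eq-def-fs}, while $R_s$ is built from $a_1,\ldots,a_{s-1}$, each of which depends inductively only on $A,B_1,\ldots,B_{s-1}$.

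For the representation \eqref{eq-higher-approximation-ODE-1}, identity \eqref{eq-An-coeff-compare} is exactly the condition that the Liouville substitution transforms \eqref{second-order-equation-W} into
\begin{equation*}
\frac{d^2\widetilde W}{d\widehat\zeta^2}=\bigl[\xi^2\widehat\zeta+\Psi_n(\widehat\zeta,\xi)\bigr]\widetilde W,\qquad \Psi_n=\mathcal{O}(\xi^{-2n+1}),
\end{equation*}
uniformly in $\widehat\zeta$ near the image of $\mathbb{S}$. Writing $\widetilde W=[C_1+r_1]\Ai(\xi^{2/3}\widehat\zeta)+[C_2+r_2]\Bi(\xi^{2/3}\widehat\zeta)$ and applying variation of parameters produces a Volterra integral equation for $(r_1,r_2)$ whose kernel carries the scaling $\xi^{-4/3}\,\Psi_n=\mathcal{O}(\xi^{-2n-1/3})$. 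Iterating along progressive paths tailored to preserve the required dominance of $\Ai$ over $\Bi$ (or vice versa) on each ray of $\mathbb{S}$, the standard a-priori estimates for Airy-type equations in \cite{APC,Dunster-2014} yield $r_{1,2}=\mathcal{O}(\xi^{-2n})$. The main obstacle I anticipate is securing this bound uniformly across all three rays of $\mathbb{S}$ simultaneously and through the turning point $\zeta=0$: the subdominant Airy solution switches across each Stokes ray, so the Volterra contours must be matched accordingly. This technically delicate step will follow the blueprint of \cite[Lemma~1]{Long-Jiang-Li}; the only genuinely new point is to verify that the $\sgn(b)/z$ contribution to $f_1$ does not spoil the analyticity of $\varphi_1$ at $\zeta=0$, which was already handled in the construction above.
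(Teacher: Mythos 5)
Your overall route---determining the $a_s$ recursively by matching powers of $\xi^{-1}$, solving $a_s+2\zeta a_s'=\varphi_s-R_s$ with the integrating factor $\zeta^{1/2}$ to get exactly the integral formulas \eqref{eq-explicit-representation-a-s}, and then running an Airy-model Volterra/Olver error analysis to get $r_{1,2}=\mathcal{O}(\xi^{-2n})$---is the same as the paper's, which states the recursion in Remark~1 and defers the error estimates to \cite[Section 3]{Long-Jiang-Li}. However, there is a genuine gap at the step where you prove that $\alpha_s=\lim_{\zeta\to\infty}\zeta^{1/2}a_s(\zeta)$ exists. You assert $\varphi_s(\zeta)=\zeta f_s(z)/f_0(z)=\mathcal{O}(\zeta^{-2/5})$ and then conclude that $(\varphi_s-R_s)/\sqrt{u}$ is absolutely integrable on $[1,\infty)$. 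With the bound as you state it this conclusion is false: an integrand of size $u^{-2/5}\cdot u^{-1/2}=u^{-9/10}$ is not integrable at infinity, so the convergence of $\tfrac12\int_0^\infty(\varphi_s-R_s)u^{-1/2}\,du$, and hence the existence of $\alpha_s$, does not follow. The correct rate is $\mathcal{O}(\zeta^{-4/5})$: since $f_s(z)\to 4B_s$ and $f_0(z)\sim 4z^3$ as $z\to\infty$, while $z\asymp\zeta^{3/5}$ by \eqref{eq-asymp-relation-zeta-z}, one has $\varphi_s\asymp\zeta\, z^{-3}\asymp\zeta^{1-9/5}=\zeta^{-4/5}$, giving an integrand of order $u^{-13/10}$. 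This is precisely the ``key fact'' the paper isolates as the basis of the whole proof, so your exponent slip hits the load-bearing estimate. Relatedly, your induction propagates only $a_j=\mathcal{O}(\zeta^{-1/2})$, but $R_s$ contains $a_j'$ (and, for $s\ge 3$, second and third derivatives through the Schwarzian bracket), so you must also carry $a_j'=\mathcal{O}(\zeta^{-3/2})$ and corresponding bounds on higher derivatives, as the paper does.

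A smaller but real slip: your justification of the analyticity of $\varphi_s$ at $\zeta=0$ conflates $\zeta=0$ with $z=0$. The point $\zeta=0$ is the image of the turning point $z=z_1$, where $f_0(z_1)=0$; analyticity there holds because $\zeta$ vanishes to first order together with $f_0$ (and, for $\varphi_2$, because the $\tfrac{5}{16\zeta^2}$ term cancels the turning-point singularity of the middle term in \eqref{eq-def-varphis}), while $f_s(z_1)$ is finite since $z_1\neq 0$. The $1/z$ poles of $f_s$ sit at $z=0$, a different point whose $\zeta$-image must simply lie outside the domain $\mathbb{D}$ containing $\mathbb{S}$; they are not ``neutralized'' by $\zeta/f_0$, and the value $f_0(0)=4$ is irrelevant to the behavior at $\zeta=0$. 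With the decay rate corrected and the derivative bounds added, the rest of your plan (Liouville substitution exactness of \eqref{eq-An-coeff-compare}, residual $\mathcal{O}(\xi^{-2n+1})$, Volterra iteration on progressive paths along the three rays of $\mathbb{S}$) matches the strategy the paper imports from \cite[Section 3]{Long-Jiang-Li}.
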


\begin{remark}\label{rem-b=0-1}
The main difference between the above lemma and \cite[Lemma 1]{Long-Jiang-Li} lies in the definition of the functions $a_{s}(\zeta)$. Combining Eqs.~\eqref{eq-def-An} and~\eqref{eq-An-coeff-compare} with the analyticity of $a_{s}(\zeta)$, we observe that the functions $a_{s}(\zeta)$ are determined recursively:
\begin{equation}\label{eq-explicit-representation-a-s}
\begin{split}
a_{1}(\zeta) &= \frac{1}{2\zeta^{1/2}} \int_{0}^{\zeta} \frac{\varphi_{1}(t)}{t^{1/2}} \, dt, \\
a_{2}(\zeta) &= \frac{1}{2\zeta^{1/2}} \int_{0}^{\zeta} \frac{\varphi_{2}(t) - 2a_{1}(t)a_{1}'(t) - t(a_{1}'(t))^2}{t^{1/2}} \, dt, \\
a_{s}(\zeta) &= \frac{1}{2\zeta^{1/2}} \int_{0}^{\zeta} \frac{\varphi_{s}(t) + F_{s}(t)}{t^{1/2}} \, dt, \qquad s > 2,
\end{split}
\end{equation}
where $F_{s}(t)$ are polynomials in $a_{1}(t), a_{2}(t), \ldots, a_{s-1}(t)$ and their derivatives.

A direct calculation yields the explicit form of $\alpha_1$:
\begin{equation}
\label{eq-explicit-represent-a1pha1}
\alpha_{1} = \frac{1}{2} \int_{0}^{\infty} \frac{\varphi_{1}(t)}{t^{1/2}} \, dt = 2B_{1} \alpha_{1,1} + \alpha_{1,2},
\end{equation}
where
\begin{equation}\label{eq-def-alpha11-alpha12}
\alpha_{1,1} := \alpha_{1,1}(A) = \int_{z_{1}}^{\infty} \frac{1}{f_{0}(z)^{1/2}} \, dz, \quad
\alpha_{1,2} := \alpha_{1,2}(A, \sgn(b)) = -\int_{z_{1}}^{\infty} \frac{\sgn(b)}{z f_{0}(z)^{1/2}} \, dz.
\end{equation}

For $s \geq 2$, using the definitions of $\varphi_{s}(\zeta)$ and $f_{s}(z)$ in Eqs.~\eqref{eq-def-varphis} and~\eqref{eq-def-fs}, respectively, we can express $\alpha_{s}$ in the form
\begin{equation}\label{eq-facterization-alphas}
\alpha_{s} = 2B_{s} \alpha_{s,1} + \alpha_{s,2},
\end{equation}
where
\begin{equation}\label{eq-def-alphas1-alphas2}
\alpha_{s,1} := \alpha_{1,1}(A), \qquad
\alpha_{s,2} := \alpha_{s,2}(A, \sgn(b), B_{1}, \ldots, B_{s-1}).
\end{equation}
\end{remark}

\begin{remark}
Although the definition of $a_{s}(\zeta)$ differs from that in \cite[Lemma 1]{Long-Jiang-Li}, the proof of the lemma above can still be carried out following the analysis in \cite[Section 3]{Long-Jiang-Li}, based on the key fact that $\varphi_{s}(\zeta) = \mathcal{O}(\zeta^{-4/5})$ as $\zeta \to \infty$. Specifically, this asymptotic behavior allows us to show by induction that
\begin{equation}
a_{s}(\zeta) = \mathcal{O}(\zeta^{-1/2}) \quad \text{and} \quad a_{s}'(\zeta) = \mathcal{O}(\zeta^{-3/2}), \quad \text{as } \zeta \to \infty.
\end{equation}
The remaining details of the proof proceed in the same manner as those in \cite[Section 3]{Long-Jiang-Li}.
\end{remark}

\begin{remark}
It should be noted that the parametrization given in \eqref{para-a-b-xi} is not unique. For example, one may instead set all $B_{s} = 0$ and parametrize $r$ and $b$ as
\begin{equation}\label{para-a-b-xi-2}
r = A\xi^{\frac45}\quad\text{and}\quad b^2 = 4\xi^{\frac65},
\end{equation}
with $A$ being an arbitrary constant. Under this assumption, Lemma~\ref{lem-higher-order-approximation-ODE-1} still holds, provided we take $B_{s} = 0$ for all $s \in \mathbb{N}^{+}$ in the definition of $a_{s}(\zeta)$. In this case, the coefficients $\alpha_{s}$ depend only on $A$ and $\sgn(b)$.
\end{remark}

\section{Main results}
\label{sec:results}

\subsection{Full asymptotic expansions of the Stokes multipliers}

Using Lemma~\ref{lem-higher-order-approximation-ODE-1} and the Stokes phenomenon for Airy functions, we derive the full asymptotic expansions of the Stokes multipliers corresponding to PI solutions satisfying the conditions $y(r) = 0$ and $y'(r) = b$ as $b \to \pm\infty$. The results are formulated in four theorems, corresponding to the following cases:
\begin{equation}
\text{(i) } A<-3/2^{\frac{2}{3}}, \quad
\text{(ii) } -3/2^{\frac{2}{3}}<A<C_{0},\quad
\text{(iii) } A>C_{0}, \quad
\text{(iv) } A=C_{0},
\end{equation}
where the constant $C_{0}$ is defined in Eq.~\eqref{Imkappa2}.

This classification reflects the differences in the limiting Stokes geometries of Eq.~\eqref{second-order-equation-w} as $\xi \to +\infty$, which are illustrated in Figure~\ref{Figure-stokes-geometry}. In particular, the sign of $\Re \alpha_{0}$ varies as $A$ crosses the boundaries of these intervals, which in turn leads to qualitatively different asymptotic behaviors of the Stokes multipliers.

\begin{figure}[!ht]
\centering
\subfigure{
\includegraphics[width=0.43\textwidth]{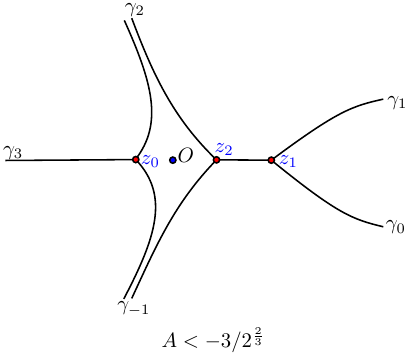}}\qquad
\subfigure{
\includegraphics[width=0.4\textwidth]{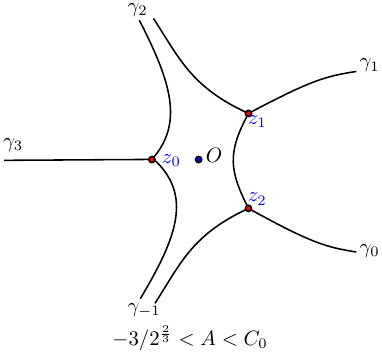}}\\
\subfigure{
\quad \includegraphics[width=0.35\textwidth]{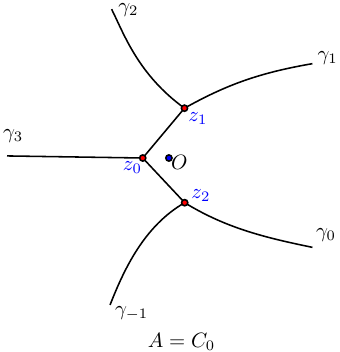}}\quad\qquad
\subfigure{
\includegraphics[width=0.4\textwidth]{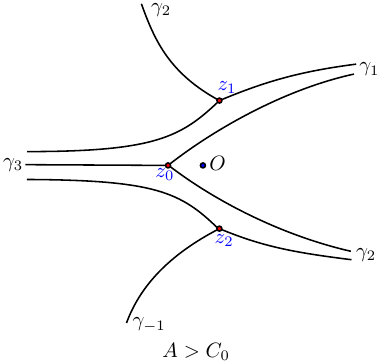}}
\caption{Stokes geometry defined by $\Re\int_{z_{i}}^{z}f_{0}(s)^{\frac{1}{2}}ds=0$ for different cases of the parameter $A$.}
\label{Figure-stokes-geometry}
\end{figure}

In cases (i), (ii), and (iii), as the parameter $A$ varies within the respective intervals, we may assume $B_{s} = 0$ for all $s = 1, 2, \ldots$, and adopt the parametrization given in Eq.~\eqref{para-a-b-xi-2}.
The asymptotic expansions of the Stokes multipliers in these three cases are stated in the following three theorems.

\begin{theorem}\label{Thm-stokes-case-I}
Let $r = 2A\xi^{2/5}$ and $b^2 = 4\xi^{6/5}$, where $A < -3/2^{2/3}$ and is bounded away from $-3/2^{2/3}$ for sufficiently large $\xi$. Then $\Re \alpha_{0} > 0$, and as $\xi \to +\infty$, the Stokes multiplier $s_{0}$ corresponding to the PI solution $y(t)$ satisfying $y(r) = 0$ and $y'(r) = b$ admits the asymptotic expansion
\begin{equation}
\label{eq-s0-case-I}
s_{0} \sim i \exp\left\{-\sum_{s=0}^{\infty} \frac{2\alpha_{s}}{\xi^{s-1}}\right\},
\end{equation}
where $\alpha_{0}$ is defined in \eqref{eq-def-alpha0}, and $\alpha_{s} := \alpha_{s}(A, \sgn(b))$ for $s \in \mathbb{N}$ are as given in Lemma~\ref{lem-higher-order-approximation-ODE-1}.
\end{theorem}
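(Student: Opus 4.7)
The plan is to apply Lemma~\ref{lem-higher-order-approximation-ODE-1} to produce uniform WKB representations of the canonical solutions $\Psi_{0}$ and $\Psi_{1}$ of the Lax pair along the Stokes ray separating the sectors $\Omega_{0}$ and $\Omega_{1}$ at $\lambda=\infty$, and then to read off $s_{0}$ by matching these representations through the Stokes phenomenon of the Airy function.

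First I would convert the Airy-type approximation \eqref{eq-higher-approximation-ODE-1} back into an asymptotic formula in $\lambda=\xi^{2/5}z$. Using \eqref{eq-asymp-relation-zeta-z} together with the limit $a_{s}(\zeta)\sim\alpha_{s}\zeta^{-1/2}$ from \eqref{eq-def-alpha-s}, expanding $\widehat{\zeta}^{3/2}=[\zeta+\mathcal{A}_{n}(\zeta,\xi)]^{3/2}$ and multiplying by $\xi$ yield, as $z\to\infty$,
\begin{equation*}
\tfrac{2}{3}\xi\widehat{\zeta}^{3/2}=\xi\bigl(\tfrac{4}{5}z^{5/2}+2Az^{1/2}\bigr)+\sum_{s=0}^{\infty}\frac{\alpha_{s}}{\xi^{s-1}}+\mathcal{O}(\xi^{-2n+1}).
\end{equation*}
The first bracket is the classical PI phase $\theta_{0}(\lambda,r)=\tfrac{4}{5}\lambda^{5/2}+r\lambda^{1/2}$ in the original variables, so the subdominant building block $\Ai_{n}(\zeta,\xi)$ carries at infinity the exponential factor $\exp\{-\theta_{0}(\lambda,r)-\sum_{s\geq 0}\alpha_{s}/\xi^{s-1}\}$ times a slowly varying algebraic prefactor. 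I fix the branch of $\zeta(z)$ on a neighborhood of the two Stokes rays emanating from $z=z_{1}$ that reach infinity in the $\Omega_{0}$ and $\Omega_{1}$ directions of the top-left panel of Figure~\ref{Figure-stokes-geometry}.

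Next I determine the matching constants $C_{1}$ and $C_{2}$ for each of $\Psi_{0}$ and $\Psi_{1}$. The canonical $\Psi_{0}$ is fixed by the normalization of its first component $\phi_{1}\sim\lambda^{-1/4}e^{-\theta_{0}}$ in $\Omega_{0}$; since $\Re\alpha_{0}>0$ in case~(i), the factor $e^{-\xi\alpha_{0}}$ is exponentially small, so $\Bi_{n}$ cannot contribute and $C_{2}=0$, while $C_{1}$ equals a universal Airy prefactor times $\exp\{\sum_{s\geq 0}\alpha_{s}/\xi^{s-1}\}$. The same argument in $\Omega_{1}$, now using the rotated building block with argument $e^{-2\pi i/3}\xi^{2/3}\widehat{\zeta}$, produces the analogous constants for $\Psi_{1}$. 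The Airy connection identity
\begin{equation*}
\Ai(z)+e^{2\pi i/3}\Ai(e^{2\pi i/3}z)+e^{-2\pi i/3}\Ai(e^{-2\pi i/3}z)=0
\end{equation*}
then expresses $\Psi_{1}$ as $\Psi_{0}$ plus an explicit multiple of its Stokes partner; reading off the off-diagonal entry of the resulting Stokes matrix delivers $s_{0}$. The product of the two matching constants contributes $\exp\{-2\sum_{s\geq 0}\alpha_{s}/\xi^{s-1}\}$, and the Airy cube-root-of-unity prefactor supplies the overall $i$, yielding \eqref{eq-s0-case-I}.

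The main obstacle will be the careful orientation of the $\zeta$-sectors relative to $\Omega_{0},\Omega_{1}$, because an incorrect choice can flip either the sign in the exponent or the $i$ prefactor; I would verify it by tracking the image of one steepest-descent reference ray through $z_{1}$ and comparing the decay direction of $\Ai_{n}$ with the prescribed subdominant behavior of $\Psi_{0}$ in $\Omega_{0}$. A related geometric point is that the uniform expansion \eqref{eq-asymp-relation-zeta-z} is valid only for $\arg z\in[0,4\pi/5]$, and case~(i) $A<-3/2^{2/3}$ is precisely the regime where both Stokes rays of interest lie inside this sector of validity; this constraint is what forces the case distinction producing the four separate theorems of Section~\ref{sec:results}.
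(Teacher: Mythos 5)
Your proposal follows the paper's own strategy in all essentials: Lemma~\ref{lem-higher-order-approximation-ODE-1} supplies the uniform Airy-type approximation along the Stokes rays emanating from $z_{1}$ that reach infinity near $\arg\lambda=\pm\pi/5$, your phase computation $\tfrac{2}{3}\xi\widehat{\zeta}^{3/2}=\tfrac{4}{5}\lambda^{5/2}+r\lambda^{1/2}+\sum_{s\geq 0}\alpha_{s}\xi^{1-s}+\cdots$ is exactly what produces the paper's matching constants $c_{1},c_{2}\propto\exp\{\mp\sum_{s}\alpha_{s}\xi^{1-s}\}$, and $s_{0}$ is then read off from the Airy Stokes phenomenon after matching with the canonical solutions \eqref{eq-canonical-solutions-hat-Phi}. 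The only real difference is bookkeeping: the paper expands a single generic solution $W$ in both $\Omega_{0}$ and $\Omega_{1}$ using the compound sectorial expansions \eqref{eq-asym-Ai}--\eqref{eq-asym-Bi} and extracts $s_{0}$ from the change of the coefficient of the recessive entry (so it never needs to fix normalization constants of $\widehat{\Phi}_{0},\widehat{\Phi}_{1}$), whereas you pin those constants for each canonical solution and invoke the three-term Airy connection identity; the two routes are equivalent in content.

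Two points need repair. First, the justification ``$\Re\alpha_{0}>0$, hence $e^{-\xi\alpha_{0}}$ is exponentially small, so $\Bi_{n}$ cannot contribute and $C_{2}=0$'' is a non sequitur: $e^{-\xi\alpha_{0}}$ is a $\lambda$-independent constant, and the vanishing of the $\Bi_{n}$-coefficient for a recessive column follows instead from the dominance of $\Bi_{n}$ along the ray where the decay $e^{-\theta_{0}}$ is imposed. More importantly, the $e^{-\theta_{0}}$ column is common to $\widehat{\Phi}_{0}$ and $\widehat{\Phi}_{1}$ (the Stokes matrix $S_{0}$ is triangular), so normalizing both canonical solutions by that component cannot detect $s_{0}$: what must be matched to the rotated approximants $\Ai\left(e^{\pm 2\pi i/3}\xi^{2/3}\widehat{\zeta}\right)$ are the $e^{+\theta_{0}}$ columns, characterized by recessiveness in the outer subsectors $\arg\lambda\in(-3\pi/5,-\pi/5)$ and $\arg\lambda\in(\pi/5,3\pi/5)$; your ``rotated building block'' remark points the right way, but the recessive normalization has to be applied there, not to $\phi_{1}\sim\lambda^{-1/4}e^{-\theta_{0}}$. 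Second, your closing claim that the restriction $\arg z\in[0,4\pi/5]$ in \eqref{eq-asymp-relation-zeta-z} is what forces the four-case split is not accurate: one of the two matching rays lies at $\arg z\approx-\pi/5$ and is reached by Schwarz reflection since all coefficients are real, and the case distinction actually reflects the change of Stokes geometry and of the sign of $\Re\alpha_{0}$ (equivalently of $\kappa^{2}$ and $\Im\widehat{\kappa}^{2}$), not the validity sector of that expansion.
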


\begin{theorem}\label{Thm-stokes-case-II}
Let $r = 2A\xi^{2/5}$ and $b^2 = 4\xi^{6/5}$, where $-3/2^{2/3} < A < C_{0}$, and $A$ is bounded away from both $-3/2^{2/3}$ and $C_{0}$ for sufficiently large $\xi$. Then $\Re \alpha_{0} > 0$, and as $\xi \to +\infty$, the Stokes multipliers corresponding to the PI solution $y(t)$ with $y(r) = 0$ and $y'(r) = b$ admit the following asymptotic expansions:
\begin{equation}\label{eq-stokes-full-asymptotic-case-II}
\begin{aligned}
s_{0} &\sim 2i \exp\left\{ -\sum_{s=0}^{\infty} \frac{2\Re \alpha_{s}}{\xi^{s-1}} \right\}
\cos\left( \sum_{s=0}^{\infty} \frac{2\Im \alpha_{s}}{\xi^{s-1}} \right), \\
s_{1} &= -\overline{s_{-1}} \sim i \exp\left\{ \sum_{s=0}^{\infty} \frac{2\alpha_{s}}{\xi^{s-1}} \right\}, \\
s_{2} &= -\overline{s_{3}} \sim -i \exp\left\{ \sum_{s=0}^{\infty} \frac{-4i\Im \alpha_{s}}{\xi^{s-1}} \right\}.
\end{aligned}
\end{equation}
Here, the quantities $\alpha_{s} := \alpha_{s}(A, \sgn(b))$ are defined as in Lemma~\ref{lem-higher-order-approximation-ODE-1}.
\end{theorem}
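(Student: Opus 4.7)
\emph{Proof proposal.}
The plan is to combine Lemma~\ref{lem-higher-order-approximation-ODE-1} with the classical Stokes phenomenon of the Airy function, performing the analysis near the turning points $z_{1}$ and $z_{2}$ of Eq.~\eqref{second-order-equation-w}, which in the regime $-3/2^{2/3}<A<C_{0}$ form a complex conjugate pair. It is the interference between the two Airy-type contributions from $z_{1}$ and $z_{2}=\overline{z_{1}}$ that produces the $\cos$ factor appearing in $s_{0}$ and distinguishes Case II from Case I.

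First I would fix $n$ sufficiently large and invoke Lemma~\ref{lem-higher-order-approximation-ODE-1} at each of $z_{1}$ and $z_{2}$, obtaining two local Airy bases $\{\Ai_{n}^{(j)},\Bi_{n}^{(j)}\}$, $j=1,2$, valid in neighborhoods of the three Stokes lines adjacent to the respective turning points. Using \eqref{eq-asymp-relation-zeta-z} together with the limits \eqref{eq-def-alpha-s}, one obtains for $z\to\infty$ in the relevant directions
\[
\xi^{2/3}\widehat{\zeta}^{\,3/2}=\tfrac{4}{5}\xi z^{5/2}+2A\xi z^{1/2}+\sum_{s=0}^{\infty}\frac{\alpha_{s}}{\xi^{s-1}}+\mathcal{O}(z^{-1/2}).
\]
Matching these Airy-type asymptotics with the canonical solutions $Y_{k}(z,\xi)$ of Eq.~\eqref{second-order-equation-w} defining the multipliers $s_{k}$ (as recalled in Appendix~\ref{sec:AppA}) pins down the constants $(C_{1},C_{2})$ in \eqref{eq-higher-approximation-ODE-1} for each $Y_{k}$. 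Composing these representations with the standard identity $\Ai(\zeta)=e^{-i\pi/6}\Ai(\zeta e^{-2i\pi/3})+e^{i\pi/6}\Ai(\zeta e^{2i\pi/3})$ at each Stokes-line crossing then produces the connection matrices between consecutive $Y_{k}$'s, from which the $s_{k}$ are read off.

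In the sector associated with $s_{0}$ both local bases contribute; since $z_{2}=\overline{z_{1}}$, the two exponential prefactors are complex conjugates of each other and combine additively as
\[
i\exp\!\Bigl(-\sum_{s=0}^{\infty}\tfrac{2\alpha_{s}}{\xi^{s-1}}\Bigr)+i\exp\!\Bigl(-\sum_{s=0}^{\infty}\tfrac{2\overline{\alpha_{s}}}{\xi^{s-1}}\Bigr)=2i\exp\!\Bigl(-\sum_{s=0}^{\infty}\tfrac{2\Re\alpha_{s}}{\xi^{s-1}}\Bigr)\cos\!\Bigl(\sum_{s=0}^{\infty}\tfrac{2\Im\alpha_{s}}{\xi^{s-1}}\Bigr),
\]
which is the stated expression. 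The sector for $s_{1}$ receives only the $z_{1}$-contribution, giving the single exponential; for $s_{2}$ the contributions from $z_{1}$ and $z_{2}$ combine with exponents of opposite sign, so the real parts cancel and only the purely imaginary series $\sum_{s}-4i\Im\alpha_{s}/\xi^{s-1}$ survives. Finally, the identities $s_{1}=-\overline{s_{-1}}$ and $s_{2}=-\overline{s_{3}}$ follow from the reality of $(r,b)$: complex conjugation is a symmetry of Eq.~\eqref{second-order-equation-w} that interchanges the canonical sectors dual under $k\mapsto -k$.

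The principal technical obstacle will be the uniform validity of the two Airy approximations throughout the unbounded matching region joining neighborhoods of $z_{1}$ and $\overline{z_{1}}$; one must carefully track the branches of $\zeta^{1/2}$ and $(f_{0}(z)/\zeta)^{1/4}$ across several Stokes lines and verify that the subdominant exponentials genuinely decay there. The hypothesis $\Re\alpha_{0}>0$, guaranteed by $A<C_{0}$ via \eqref{Imkappa2}, is precisely what makes the interference formula rigorous rather than merely formal.
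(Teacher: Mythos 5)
Your overall framework (Lemma~\ref{lem-higher-order-approximation-ODE-1} plus Airy asymptotics matched against the canonical solutions) is the right one, and your treatment of $s_{1}$ — a single-turning-point matching at $z_{1}$ along the two Stokes lines pointing into $\Omega_{1}$ and $\Omega_{2}$ — is essentially what the paper does. The gap is in how you obtain $s_{0}$ and $s_{2}$. The paper never performs a two-turning-point analysis in this case: having $s_{1}\sim i e^{2\Sigma}$ with $\Sigma=\sum_{s\ge 0}\alpha_{s}\xi^{1-s}$, it gets $s_{-1}=-\overline{s_{1}}$ from the reality symmetry \eqref{eq-sk-s-k-relation}, and then reads off $s_{2}$, $s_{3}$, $s_{0}$ from the exact constraints $s_{k}=i(1+s_{k+2}s_{k+3})$, using $\Re\alpha_{0}>0$ to identify the dominant terms. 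Your proposed alternative — superposing the Airy contributions from $z_{1}$ and $z_{2}=\overline{z_{1}}$ additively — is only a heuristic: Lemma~\ref{lem-higher-order-approximation-ODE-1} gives a uniform approximation only in a neighborhood of the Stokes lines attached to one turning point, so the two local representations live in different regions and must be connected through the intermediate region while tracking which exponential is dominant and which is recessive; that connection (which you yourself flag as the ``principal technical obstacle'') is precisely the step that is missing, and without it the interference formula is not established. Note also that the exact algebra gives $s_{0}=i\bigl[2e^{-2\Re\Sigma}\cos(2\Im\Sigma)-e^{-4\Re\Sigma}\bigr]$ up to controlled errors; your additive ansatz silently discards the second term and gives no error control relative to the cosine, which can vanish — this is exactly where a naive superposition argument breaks down (and why the paper must keep both terms in Theorem~\ref{Thm-stokes-case-IV}).

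Your stated mechanism for $s_{2}$ is also wrong as a mechanism: $s_{2}\sim -i e^{2\overline{\Sigma}-2\Sigma}=-ie^{-4i\Im\Sigma}$ arises as a \emph{ratio} (equivalently from $s_{-1}=i(1+s_{1}s_{2})$, so $s_{2}=(-is_{-1}-1)/s_{1}$), not from an additive cancellation of real parts — no sum of terms of the form $ie^{\pm2\Sigma}$, $ie^{\pm2\overline{\Sigma}}$ can produce a purely oscillatory factor. Two smaller slips: the Airy connection identity has coefficients $e^{\pm i\pi/3}$, i.e.\ $\Ai(z)=e^{i\pi/3}\Ai(ze^{-2\pi i/3})+e^{-i\pi/3}\Ai(ze^{2\pi i/3})$, not $e^{\pm i\pi/6}$; and the matched exponent should read $\tfrac{2}{3}\bigl(\xi^{2/3}\widehat{\zeta}\bigr)^{3/2}=\tfrac{4}{5}\xi z^{5/2}+2A\xi z^{1/2}+\sum_{s\ge 0}\alpha_{s}\xi^{1-s}+\mathcal{O}(z^{-1/2})$. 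The repair is straightforward and shortens your argument: keep your $s_{1}$ computation, then replace the interference step by the symmetry relation and the constraint identities, which yield $s_{0}$, $s_{2}$, $s_{3}$ with the stated expansions once $\Re\alpha_{0}>0$ is invoked.
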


\begin{theorem}\label{Thm-stokes-case-III}
Let $r = 2A\xi^{2/5}$ and $b^2 = 4\xi^{6/5}$, where $A > C_{0}$ and is bounded away from $C_{0}$ for sufficiently large $\xi$. Then $\Re \alpha_{0} < 0$, and as $\xi \to +\infty$, the Stokes multipliers corresponding to the PI solution $y(t)$ with $y(r) = 0$ and $y'(r) = b$ admit the following asymptotic expansions:
\begin{equation}\label{eq-stokes-full-asymptotic-case-III}
\begin{aligned}
s_{0}&\sim -i\exp\left\{-\sum\limits_{s=0}^{\infty}\frac{4\Re\alpha_{s}}{\xi^{s-1}}\right\},\\
s_{1}&=-\overline{s_{-1}}\sim i\exp\left\{\sum\limits_{s=0}^{\infty}\frac{2\alpha_{s}}{\xi^{s-1}}\right\},\\
s_{2}&=-\overline{s_{3}}\sim i\exp\left\{\sum\limits_{s=0}^{\infty}\frac{-2\alpha_{s}}{\xi^{s-1}}\right\}.
\end{aligned}
\end{equation}
Here, the quantities $\alpha_{s} := \alpha_{s}(A, \sgn(b))$ are defined as in Lemma~\ref{lem-higher-order-approximation-ODE-1}.
\end{theorem}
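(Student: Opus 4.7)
\textbf{Proof plan for Theorem~\ref{Thm-stokes-case-III}.} The strategy is the one used in Theorems~\ref{Thm-stokes-case-I} and~\ref{Thm-stokes-case-II}: apply Lemma~\ref{lem-higher-order-approximation-ODE-1} to approximate solutions of Eq.~\eqref{second-order-equation-W} by the higher-order Airy expressions $\Ai_{n}(\zeta,\xi)$ and $\Bi_{n}(\zeta,\xi)$ uniformly in neighborhoods of the three Stokes lines issuing from $z_{1}$, then match these approximations against the canonical Lax-pair solutions $\Psi_{k}(\lambda)$ defined by their Poincar\'e asymptotics in the five Stokes sectors at $\lambda=\infty$, and finally read the multipliers $s_{k}$ off the resulting connection matrices. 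The large-$z$ relation~\eqref{eq-asymp-relation-zeta-z} turns each such matching into the appearance of a factor $\exp\{\pm 2\alpha_{0}\xi\}$; the higher-order coefficients $\alpha_{s}$ are then supplied by Lemma~\ref{lem-higher-order-approximation-ODE-1} together with~\eqref{eq-explicit-representation-a-s}--\eqref{eq-facterization-alphas}.

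The essential new point compared to case (ii) is the Stokes geometry, illustrated in the last panel of Figure~\ref{Figure-stokes-geometry}. Since $A>C_{0}$ forces $\Re\alpha_{0}<0$ (equivalently $\Im\widehat{\kappa}^{2}<0$), the Stokes lines emanating from $z_{1}$ are rearranged: the canonical solutions $\Psi_{k}$ and $\Psi_{k+1}$ whose transition constant is $s_{0}$ are no longer adjacent in a single Stokes sector at $z_{1}$, and the analytic continuation between them must thread past the other turning point $z_{0}$. This additional loop produces a second factor $\exp\{-2\alpha_{0}\xi\}$ on top of the one already coming from the crossing at $z_{1}$, explaining the exponent $-4\Re\alpha_{s}/\xi^{s-1}$ in the expansion of $s_{0}$ and the sign change from $2i$ (case II) to $-i$. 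For $s_{\pm 1}$ and $s_{\pm 2}$ only a single Stokes crossing at $z_{1}$ is needed, which gives the cleaner factor $\exp\{\pm\sum_{s\ge 0}2\alpha_{s}/\xi^{s-1}\}$; the sign change in $s_{2}$ from $-i$ in case II to $+i$ in case III comes from the same reshuffling of Stokes sectors.

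Once $s_{0}$, $s_{1}$ and $s_{2}$ are obtained, the identities $s_{1}=-\overline{s_{-1}}$ and $s_{2}=-\overline{s_{3}}$ follow from the Schwarz symmetry $\Psi_{-k}(\lambda)\simeq\overline{\Psi_{k}(\overline{\lambda})}$ of the Lax pair, which holds because $r$ and $b$ are real. The higher-order structure, in which $\alpha_{0}$ is replaced by the formal series $\sum_{s\ge 0}\alpha_{s}/\xi^{s-1}$, is obtained by substituting the uniform expansion~\eqref{eq-higher-approximation-ODE-1}, with remainder $\mathcal{O}(\xi^{-2n})$, and letting $n\to\infty$ in the usual all-orders sense.

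The main obstacle I expect is the second step: identifying the precise analytic-continuation path forced by the reshuffled Stokes geometry, and then verifying that the detour around $z_{0}$ contributes \emph{exactly} one additional factor $e^{-2\alpha_{0}\xi}$ (so the exponent in $s_{0}$ is $-4\Re\alpha_{0}\xi$ rather than $-2\Re\alpha_{0}\xi$ or some mixed quantity), that the recessive/dominant assignment of $\Ai_{n},\Bi_{n}$ stays consistent across this longer contour, and that the $\mathcal{O}(\xi^{-2n})$ error bound from Lemma~\ref{lem-higher-order-approximation-ODE-1} is preserved throughout. Once this bookkeeping is done, the remainder of the proof is a routine repetition of the arguments already carried out for cases (i) and (ii).
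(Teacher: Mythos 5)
Your plan for $s_{1}$ (and the symmetry step $s_{-1}=-\overline{s_{1}}$, $s_{3}=-\overline{s_{2}}$ via Eq.~\eqref{eq-sk-s-k-relation}) coincides with the paper: the Airy matching at $z_{1}$ along its two adjacent Stokes curves is literally unchanged from case (ii) and gives $s_{1}\sim i\exp\{\sum_{s\ge0}2\alpha_{s}\xi^{1-s}\}$. The divergence is in how you propose to get $s_{0}$ and $s_{2}$, and there the proposal has a genuine gap. The paper does \emph{not} perform any further analytic continuation: it obtains the remaining multipliers purely algebraically from the constraint $s_{k}=i(1+s_{k+2}s_{k+3})$ together with the reality relation, using $\Re\alpha_{0}<0$ to decide which exponentials dominate. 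Concretely, writing $\sigma=\sum_{s\ge0}\alpha_{s}\xi^{1-s}$, one has $s_{2}=(-is_{-1}-1)/s_{1}\sim i e^{-2\sigma}$ because $s_{-1}\to0$ when $\Re\alpha_{0}<0$, and then $s_{0}=i(1+s_{2}s_{3})$ with $s_{3}=-\overline{s_{2}}$ gives $1+s_{2}s_{3}=-e^{-4\Re\sigma}+e^{-2\sigma}+e^{-2\overline{\sigma}}\sim-e^{-4\Re\sigma}$, i.e.\ $s_{0}\sim-i\exp\{-4\sum_{s\ge0}\Re\alpha_{s}\xi^{1-s}\}$.

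Your alternative route --- continuing the Airy frame along a path that ``threads past $z_{0}$'' and claiming this detour contributes exactly one more factor $e^{-2\alpha_{0}\xi}$ --- does not work as stated, for two reasons. First, Lemma~\ref{lem-higher-order-approximation-ODE-1} only provides uniform approximations in a neighborhood of $z_{1}$ and its adjacent Stokes lines $\mathbb{S}$; a continuation past another turning point (and note that for $A>C_{0}$ the relevant second turning point for the conjugate contribution is $z_{2}=\overline{z_{1}}$, not the real root $z_{0}$) would require a second local Airy analysis and a matching between the two frames that the lemma does not supply. Second, the bookkeeping you flag as the ``main obstacle'' is in fact inconsistent: two factors $e^{-2\alpha_{0}\xi}$ would give the complex exponent $-4\alpha_{0}\xi$, whereas the theorem asserts $-4\Re\alpha_{0}\xi$ (and $-4\Re\alpha_{s}$ at every order). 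The cancellation of the imaginary parts cannot come from a single continuation path; it comes from pairing $s_{2}$ with its Schwarz conjugate $s_{3}=-\overline{s_{2}}$ inside the identity $s_{0}=i(1+s_{2}s_{3})$, i.e.\ from the factor $e^{-2\overline{\sigma}}$, which your detour argument never produces. Replacing the continuation step by the constraint-plus-symmetry argument closes the gap and is exactly the paper's (shorter) proof.
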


From the previous three theorems, we observe that \( s_{0} \to 0 \) or \( s_{0} \to \infty \) as \( \xi \to +\infty \). Therefore, these results do not allow us to resolve the connection problem for certain PI solutions between \( t \to -\infty \) and large positive zeros. To address this issue, we must fix the Stokes multipliers \( s_{k} \), which can be achieved by setting \( A = C_{0} \).
However, when \( A = C_{0} \) is fixed, the characterization of the Stokes multipliers for all real PI solutions cannot be accomplished under the assumption that \( B_{s} = 0 \) for all \( s \in \mathbb{N}\). Hence, we must adopt the general parametrization given in Eq.~\eqref{para-a-b-xi} with \( A = C_{0} \).
The result is stated as follows.

\begin{theorem}\label{Thm-stokes-case-IV}
Let
\[
r = 2C_{0}\xi^{2/5} \quad\text{and}\quad
b^2 = 4\xi^{6/5} \sum_{s=1}^{\infty} \frac{B_{s}}{\xi^{s}},
\]
with \( C_{0} \) as defined in Eq.~\eqref{Imkappa2}.
Then \( \Re \alpha_{0} = 0 \), and as \( \xi \to +\infty \), the Stokes multipliers corresponding to the PI solution satisfying \( y(r) = 0 \), \( y'(r) = b \) admit the following full asymptotic expansions:
\begin{equation}
\label{eq-stokes-full-asymptotic-case-IV}
\begin{split}
s_{0}&\sim i\left\{2\exp\left\{-\sum\limits_{s=0}^{\infty}\frac{2\Re\alpha_{s+1}}{\xi^{s}}\right\}\cos{\left[\sum\limits_{s=-1}^{\infty}\frac{2\Im\alpha_{s+1}}{\xi^{s}}\right]}-\exp\left\{-\sum\limits_{s=0}^{\infty}\frac{4\Re\alpha_{s+1}}{\xi^{s}}\right\}\right\},\\
s_{1}&=-\overline{s_{-1}}\sim i\exp\left\{\sum\limits_{s=0}^{\infty}\frac{2\Re\alpha_{s+1}}{\xi^{s}}+\sum\limits_{s=-1}^{\infty}\frac{2i\Im\alpha_{s+1}}{\xi^{s}}\right\},\\
s_{2}&=-\overline{s_{-3}}\sim -i\exp\left\{-\sum\limits_{s=-1}^{\infty}\frac{4i\Im\alpha_{s+1}}{\xi^{s}}\right\}+i\exp\left\{-\sum\limits_{s=0}^{\infty}\frac{2\Re\alpha_{s+1}}{\xi^{s}}-\sum\limits_{s=-1}^{\infty}\frac{2i\Im\alpha_{s+1}}{\xi^{s}}\right\}
\end{split}
\end{equation}
as $\xi\to+\infty$. Here, $\alpha_{s}$, $s=1,2,\dots$, are defined as in Lemma~\ref{lem-higher-order-approximation-ODE-1} with $A=C_{0}$.
\end{theorem}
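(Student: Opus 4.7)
The proof will follow the same overall framework as Theorems \ref{Thm-stokes-case-I}--\ref{Thm-stokes-case-III}: first, apply Lemma \ref{lem-higher-order-approximation-ODE-1} to represent any solution of \eqref{second-order-equation-W} as an Airy combination $W(\zeta,\xi) = [C_1+r_1]\Ai_n + [C_2+r_2]\Bi_n$ valid in a neighborhood of the Stokes lines emanating from $z_1$; second, translate the large-$\zeta$ Airy asymptotics back to the $\lambda\to\infty$ behavior of $\phi_1(\lambda)$ by means of \eqref{eq-asymp-relation-zeta-z} together with the analyticity of each $a_s(\zeta)$, producing the canonical WKB phase $\xi(\tfrac{4}{5}z^{5/2}+2Az^{1/2})$ plus the tail $\sum_{s=0}^{\infty}\alpha_s\,\xi^{1-s}$; third, match these expansions in each Stokes sector with the canonical solutions $\Phi_k$ from Appendix~\ref{sec:AppA} and read off the Stokes multipliers. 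Throughout, the constants $C_1,C_2$ are fixed by the local data $y(r)=0$, $y'(r)=b$ imported through the Lax pair, and under the general parametrization \eqref{para-a-b-xi} the additional $B_s$ freedom enters each $\alpha_s$ ($s\ge 1$) through the factorization \eqref{eq-facterization-alphas}.

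The critical new feature at $A=C_{0}$ is that $\Re\alpha_{0} = \tfrac{\pi}{2}\,\Im\widehat{\kappa}^{2}=0$, so the two exponentials $\exp(-2\xi\alpha_{0})$ and $\exp(-4\xi\alpha_{0})$ are purely oscillatory and of the same modulus. In Case (ii) the matching procedure discards the second as subdominant relative to the first; in Case (iii) the roles are reversed; but at the boundary $A=C_{0}$ both must be kept at leading order. This yields the two-term structure of $s_{0}$ and $s_{2}$ in \eqref{eq-stokes-full-asymptotic-case-IV}, combining features inherited from both Case (ii) (the cosine factor) and Case (iii) (the doubled-exponent term). Because the large oscillation $\xi\,\Im\alpha_{0}$ is now carried entirely by the phase rather than partially shared with the modulus, the index ranges shift: the phase sums must start at $s=-1$ so as to include the $\Im\alpha_{0}$ contribution, while the amplitude sums start at $s=0$ since $\Re\alpha_{0}=0$.

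The main obstacle will be the bookkeeping required to justify retaining both exponentials at leading order. Concretely, one must verify that every $a_s(\zeta)$-correction to $\xi^{2/3}\widehat{\zeta}^{3/2}$ is consistently assigned to either the real or imaginary part of the resulting argument, so that the coefficient of the cosine and the coefficient of the doubled exponential come out with the exact amplitudes $\Re\alpha_{s+1}$ and $\Im\alpha_{s+1}$ displayed in \eqref{eq-stokes-full-asymptotic-case-IV}. An error of a single term in this assignment would shift a contribution from the cosine argument into its amplitude (or vice versa), distorting the leading behavior. A secondary but more routine task is to check that the $B_s$-dependence absorbed into $\alpha_s$ via \eqref{eq-facterization-alphas} indeed allows the general parametrization \eqref{para-a-b-xi} to cover all real PI solutions once $A$ is frozen at $C_{0}$; this follows from the one-to-one correspondence between PI solutions and pairs of Stokes multipliers, combined with the fact that the pair $(\Re\alpha_{s+1},\Im\alpha_{s+1})$ ranges over the requisite parameter space as the $B_s$ vary.
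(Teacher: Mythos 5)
Your overall frame (Lemma~\ref{lem-higher-order-approximation-ODE-1}, Airy asymptotics translated through \eqref{eq-asymp-relation-zeta-z}, matching with the canonical solutions) is the paper's strategy, and you correctly isolate the key new feature at $A=C_{0}$: $\Re\alpha_{0}=\tfrac{\pi}{2}\Im\widehat{\kappa}^{2}=0$, so the two exponentials are of comparable modulus and neither may be dropped, which is what produces the hybrid two-term formulas and the shifted index ranges. However, there is a gap in how you propose to obtain $s_{0}$ and $s_{2}$. In the paper, the matching near the turning point $z_{1}$ yields \emph{only} $s_{1}$ (exactly as in the proof of Theorem~\ref{Thm-stokes-case-II}, where nothing is discarded at that stage); the remaining multipliers are then generated algebraically, using the reality symmetry \eqref{eq-sk-s-k-relation} to get $s_{-1}=-\overline{s_{1}}$ and the cyclic constraint \eqref{eq-constraints-stokes-multipliers} to solve for $s_{2}$ and $s_{0}$ (e.g.\ $s_{2}=(-is_{-1}-1)/s_{1}$, $s_{0}=i(1+s_{2}s_{3})=i(1-|s_{2}|^{2})$). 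It is precisely in this algebraic propagation step — not in the matching — that the sign of $\Re\alpha_{0}$ decides which exponential dominates in cases (ii)–(iii) and forces both to be retained at $A=C_{0}$. Your sketch never invokes \eqref{eq-sk-s-k-relation} or \eqref{eq-constraints-stokes-multipliers}, and a literal sector-by-sector matching for $s_{0}$ and $s_{2}$ would not go through: the uniform Airy approximation is valid only near the Stokes rays emanating from $z_{1}$, and at $A=C_{0}$ the third such line degenerates into a saddle connection with $z_{0}$ (this is exactly the meaning of $\Im\widehat{\kappa}^{2}=0$), so the sectors needed to read off $s_{2}$ directly are not reachable without additional turning-point analysis at $z_{0}$.

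Two smaller points. The constants $C_{1},C_{2}$ are \emph{not} fixed by the data $y(r)=0$, $y'(r)=b$: that data enters only through the potential (i.e.\ through $f_{s}$ and hence $\alpha_{s}$), while $C_{1},C_{2}$ are arbitrary and cancel from the multiplier formulas, as in the proofs of Theorems~\ref{Thm-stokes-case-I}--\ref{Thm-stokes-case-II}. Also, your final remark about the $B_{s}$ "covering all real PI solutions" concerns the scope of the parametrization \eqref{para-a-b-xi}, not the proof of the expansions \eqref{eq-stokes-full-asymptotic-case-IV}, and is not needed once $s_{1}$ is computed and the constraints are applied.
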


\begin{remark}
As \( A \to -3/2^{2/3} \), the two turning points \( z_{1} \) and \( z_{2} \) coalesce, and Lemma~\ref{lem-higher-order-approximation-ODE-1} no longer applies in this limit. In such cases, a similar analysis can be carried out using parabolic cylinder functions. However, the four theorems presented above are sufficient for achieving the goals of this paper.
Specifically, by applying Theorems~\ref{Thm-stokes-case-I}, \ref{Thm-stokes-case-II}, and \ref{Thm-stokes-case-III}, we obtain a classification of PI solutions with respect to the initial data \( (r, b) \) at their zeros and can asymptotically construct the phase diagram of PI solutions in the \( (r, b) \)-plane. Furthermore, Theorem~\ref{Thm-stokes-case-IV} provides an asymptotic approximation for the locations of large positive zeros of certain real PI solutions.
\end{remark}

\subsection{\texorpdfstring{Classification of the PI solutions with respect to $r$ and $b$}{Classification of the PI solutions with respect to r and b}}

In this subsection, we establish the relationship between the zero parameters of real PI solutions and the three types of asymptotic behaviors described in Eqs.~\eqref{eq-behavior-type-A}, \eqref{eq-behavior-type-B}, and \eqref{eq-behavior-type-C}. Let \( r = 2A\xi^{4/5} \) and \( b^2 = 4\xi^{6/5} \). From Theorem~\ref{Thm-stokes-case-II}, we obtain the following result.

\begin{corollary}\label{cor-classification-1}
For any fixed \( A \in (-3/2^{2/3}, C_{0}) \), there exists an increasing sequence \( \{\xi_{n}\}_{n=0}^\infty \) such that:
\begin{enumerate}[(i)]
\item The PI solutions satisfying \( y(r_n) = 0 \), \( y'(r_n) = b_n \), where \( r_n = 2A\xi_n^{4/5} \) and \( b_n = \pm 2\xi_n^{3/5} \), are of type (B);
\item For \( \xi_{2m-1} < \xi < \xi_{2m} \), \( m = 1, 2, \ldots \), the PI solutions with \( y(r) = 0 \), \( y'(r) = b \) are of type (A);
\item For \( \xi_{2m} < \xi < \xi_{2m+1} \), \( m = 0, 1, 2, \ldots \), the PI solutions with \( y(r) = 0 \), \( y'(r) = b \) are of type (C).
\end{enumerate}

Moreover, there exists \( n_{0} \in \mathbb{Z} \) such that, as \( n \to \infty \), the sequence \( \xi_{n} \) satisfies the asymptotic expansion:
\begin{equation}\label{eq-asym-xi-n}
\xi_{n} \sim \frac{\left(n + \frac{1}{2} + n_{0}\right)\pi + 2\Im \alpha_{1}}{-2\Im \alpha_{0}} \left[ 1 + \sum_{s=2}^{\infty} \frac{\mu_{s}}{\left(n - \frac{1}{2} + n_{0} + \frac{2\Im \alpha_{1}}{\pi} \right)^{s}} \right],
\end{equation}
where the coefficients \( \mu_{s} \) can be expressed in terms of \( \Im \alpha_{j} \) for \( j = 0, 1, \ldots, s \). In particular, \( \mu_{1} = -\dfrac{4\Im \alpha_{0} \Im \alpha_{2}}{\pi^{2}} \).
\end{corollary}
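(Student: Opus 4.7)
The plan is to read off the classification directly from the leading asymptotic formula for $s_0$ in Theorem~\ref{Thm-stokes-case-II}, and then invert the asymptotic equation $\Im s_0 = 0$ to obtain \eqref{eq-asym-xi-n}. By Theorem~\ref{Thm-stokes-case-II},
\[
\Im s_0 \;\sim\; 2\,E(\xi)\cos G(\xi), \qquad
E(\xi) := \exp\Bigl\{-\sum_{s=0}^{\infty}\tfrac{2\Re\alpha_s}{\xi^{s-1}}\Bigr\}, \quad
G(\xi) := \sum_{s=0}^{\infty}\tfrac{2\Im\alpha_s}{\xi^{s-1}}.
\]
Since $E(\xi)>0$ and, for $A\in(-3/2^{2/3},C_0)$, one has $\kappa^2(A)<0$ and hence $\Im\alpha_0 = \tfrac{\pi}{4}\kappa^2 <0$, the leading term of $G$ drives $G(\xi)\to-\infty$ strictly monotonically for large $\xi$. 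Thus for each sufficiently negative integer $k$, the equation $G(\xi)=(k+\tfrac12)\pi$ has a unique root; labelling these roots in increasing order as $\xi_0<\xi_1<\cdots$, each makes $\cos G(\xi_n)=0$, hence $\Im s_0=0$, and therefore by \eqref{eq-classifed-by-stokes} is a type~(B) point, proving~(i). Because $G$ decreases by exactly $\pi$ over each interval $(\xi_n,\xi_{n+1})$, the sign of $\cos G$ alternates between consecutive intervals; fixing the scheme by a single sign check on one reference interval yields (A) on $(\xi_{2m-1},\xi_{2m})$ and (C) on $(\xi_{2m},\xi_{2m+1})$, proving~(ii)--(iii).

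To derive \eqref{eq-asym-xi-n}, I would rewrite the defining equation in the form
\[
2\,\Im\alpha_0\,\xi_n + 2\,\Im\alpha_1 + \sum_{s\ge 2}\frac{2\,\Im\alpha_s}{\xi_n^{s-1}} \;=\; -\Bigl(n + n_0 + \tfrac12\Bigr)\pi,
\]
where $n_0\in\mathbb{Z}$ is fixed so the indexing agrees with the enumeration $\xi_0<\xi_1<\cdots$ above. Truncating at the first two terms yields the leading approximation
\[
\xi_n^{(0)} \;=\; \frac{(n+\tfrac12+n_0)\pi + 2\,\Im\alpha_1}{-2\,\Im\alpha_0}.
\]
Writing $\xi_n=\xi_n^{(0)}(1+\eta_n)$, substituting back, and expanding the tail in powers of $1/\xi_n^{(0)}$ produces a fixed-point equation $\eta_n=\Phi(\eta_n;\xi_n^{(0)})$ with $\Phi=\mathcal{O}(1/\xi_n^{(0)})$ and $|\partial_\eta\Phi|=\mathcal{O}(1/\xi_n^{(0)})$, so that contraction gives a unique small solution $\eta_n$. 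Iterating this bootstrap and collecting powers of $1/N$ with $N:=n-\tfrac12+n_0+2\,\Im\alpha_1/\pi$ produces the multiplicative expansion stated in \eqref{eq-asym-xi-n}; the coefficients $\mu_s$ are polynomials in $\Im\alpha_0,\ldots,\Im\alpha_s$, and the explicit value advertised for $\mu_1$ comes from balancing the $2\,\Im\alpha_2/\xi_n^{(0)}$ term against the leading linear term, which after simplification yields $-4\,\Im\alpha_0\,\Im\alpha_2/\pi^2$.

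The principal technical obstacle is justifying that the (generally divergent) asymptotic expansion of $s_0$ from Theorem~\ref{Thm-stokes-case-II} transfers to a genuine asymptotic expansion of its zeros. Existence and uniqueness of a true zero $\xi_n$ near $\xi_n^{(0)}$ follow from the intermediate value theorem applied to $\Im s_0$: the approximant $2E\cos G$ has a simple zero at $\xi_n^{(0)}$ with derivative $\sim 2\,\Im\alpha_0\,E(\xi_n^{(0)})\ne 0$, and the error in Theorem~\ref{Thm-stokes-case-II} is uniform for $A$ bounded away from the endpoints. Uniformity of the expansion \eqref{eq-asym-xi-n} in $n$ then follows by applying the implicit function theorem at each truncation level of the asymptotic series defining $G$, which is where the full asymptotic character of Theorem~\ref{Thm-stokes-case-II}, not merely its leading term, becomes essential.
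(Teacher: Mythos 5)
Your proposal is correct and takes essentially the same route as the paper: read off the sign of $\Im s_{0}\sim 2E(\xi)\cos G(\xi)$ from Theorem~\ref{Thm-stokes-case-II} (using $\Im\alpha_{0}=\tfrac{\pi}{4}\kappa^{2}<0$ on $(-3/2^{2/3},C_{0})$) to locate the alternating zeros and classify the intervals via \eqref{eq-classifed-by-stokes}, then invert the phase condition $-2\Im\alpha_{0}\xi_{n}-2\Im\alpha_{1}-2\Im\alpha_{2}/\xi_{n}-\cdots\sim(n+n_{0}+\tfrac12)\pi$ to get \eqref{eq-asym-xi-n}; your contraction/implicit-function elaboration only fleshes out what the paper leaves implicit. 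The coefficient $-4\Im\alpha_{0}\Im\alpha_{2}/\pi^{2}$ you attach to $\mu_{1}$ is the one the paper's proof computes as $\mu_{2}$ (the index of the displayed sum starts at $s=2$), an inconsistency internal to the paper rather than a gap in your argument.
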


\begin{proof}
Fix any \( A \in (-3/2^{2/3}, C_{0}) \). From the asymptotic expression for \( s_{0} \) in Eq.~\eqref{eq-stokes-full-asymptotic-case-II}, there exists a constant \( M > 0 \) such that for all \( \xi > M \), the function \( s_{0}(\xi) \) admits a sequence of simple zeros \( \{ \xi_n \} \), viewed as a function of \( \xi \). By choosing \( \xi_1 \) appropriately, we can also arrange that \( \Im s_{0}(\xi) > 0 \) for \( \xi_{2m-1} < \xi < \xi_{2m} \) and \( \Im s_{0}(\xi) < 0 \) for \( \xi_{2m} < \xi < \xi_{2m+1} \). Comparing this behavior with the classification criterion in Eq.~\eqref{eq-classifed-by-stokes}, we establish the assertions of the corollary, except for the asymptotic formula~\eqref{eq-asym-xi-n}.

To derive the asymptotics of \( \xi_n \), we return to Eq.~\eqref{eq-stokes-full-asymptotic-case-II}. Since \( \Im \alpha_{0} < 0 \), there exists \( n_{0} \in \mathbb{Z} \) such that
\begin{equation}\label{eq-s0-cos-compare}
-2\Im \alpha_{0}\cdot\xi_{n}-2\Im \alpha_{1}-\frac{2\Im \alpha_{2}}{\xi_{n}}-\cdots\sim \left(n+n_{0}+\frac{1}{2}\right)\pi
\end{equation}
as \( n \to \infty \). Solving this asymptotic relation for \( \xi_n \), we obtain
the desired asymptotic approximation of $\xi_n$ in~\eqref{eq-asym-xi-n},
where the coefficients \( \mu_s \) can be expressed in terms of \( \Im \alpha_{j} \), for \( j = 0, 1, \ldots, s \). In particular, by substituting the expansion~\eqref{eq-asym-xi-n} into~\eqref{eq-s0-cos-compare}, we find
\[
\mu_{2}=-\frac{4\Im\alpha_{0}\Im\alpha_{2}}{\pi^2}. \qedhere
\]
\end{proof}

The above corollary provides an asymptotic classification of real PI solutions in the limit \( b \to \infty \), for arbitrary \( r \in \mathbb{R} \). Since \( \Im \alpha_{0} = \frac{\pi \kappa^2}{4} \) depends on \( A \), the quantities \( \xi_{n} \), \( r_{n} \), and \( \hat{b}_{n} \) are all functions of \( A \). Therefore, Corollary~\ref{cor-classification-1} can be equivalently reformulated as follows: there exist two sequences of curves, denoted \( \Sigma_{n}^{+} \) and \( \Sigma_{n}^{-} \), such that the PI solution satisfying \( y(r) = 0 \), \( y'(r) = b \) belongs to type (B) when the point \( (r, b) \) lies on one of these curves.
Moreover, there exists a constant \( n_{0} \geq 0 \) such that, as \( n \to \infty \), the curves \( \Sigma_{n}^{\pm} \) admit the asymptotic descriptions:
\begin{equation}\label{eq-Sigma-n}
\Sigma_{n}^{\pm}:
\left\{\begin{aligned}
r^{\pm}:=r_{n}^{\pm}(A)\sim 2A\left[\frac{(n+\frac{1}{2}+n_{0})\pi\pm 2\Im \alpha_{1}}{-2\Im \alpha_{0}}\right]^{\frac{4}{5}},\\
b^{\pm}:=\hat{b}_{n}^{\pm}(A)\sim \pm 2 \left[\frac{(n+\frac{1}{2}+n_{0})\pi\pm 2\Im \alpha_{1}}{-2\Im \alpha_{0}}\right]^{\frac{3}{5}},
\end{aligned}\right.
\qquad \text{for }-3/2^{\frac{2}{3}}<A<C_{0}
\end{equation}
as $n\to\infty$.
These asymptotic curves are confirmed by numerical simulations; see Figure~\ref{fig-classification-curves}. Each blue curve terminates at an endpoint due to the restriction \( A \in (-3/2^{2/3}, C_{0}) \). As \( A \to -3/2^{2/3} \), the curves tend to infinity, while as \( A \to C_{0} \), they approach finite limiting points denoted by blue circles.

\begin{figure}[!ht]
\centering
\includegraphics[width=0.8\textwidth]{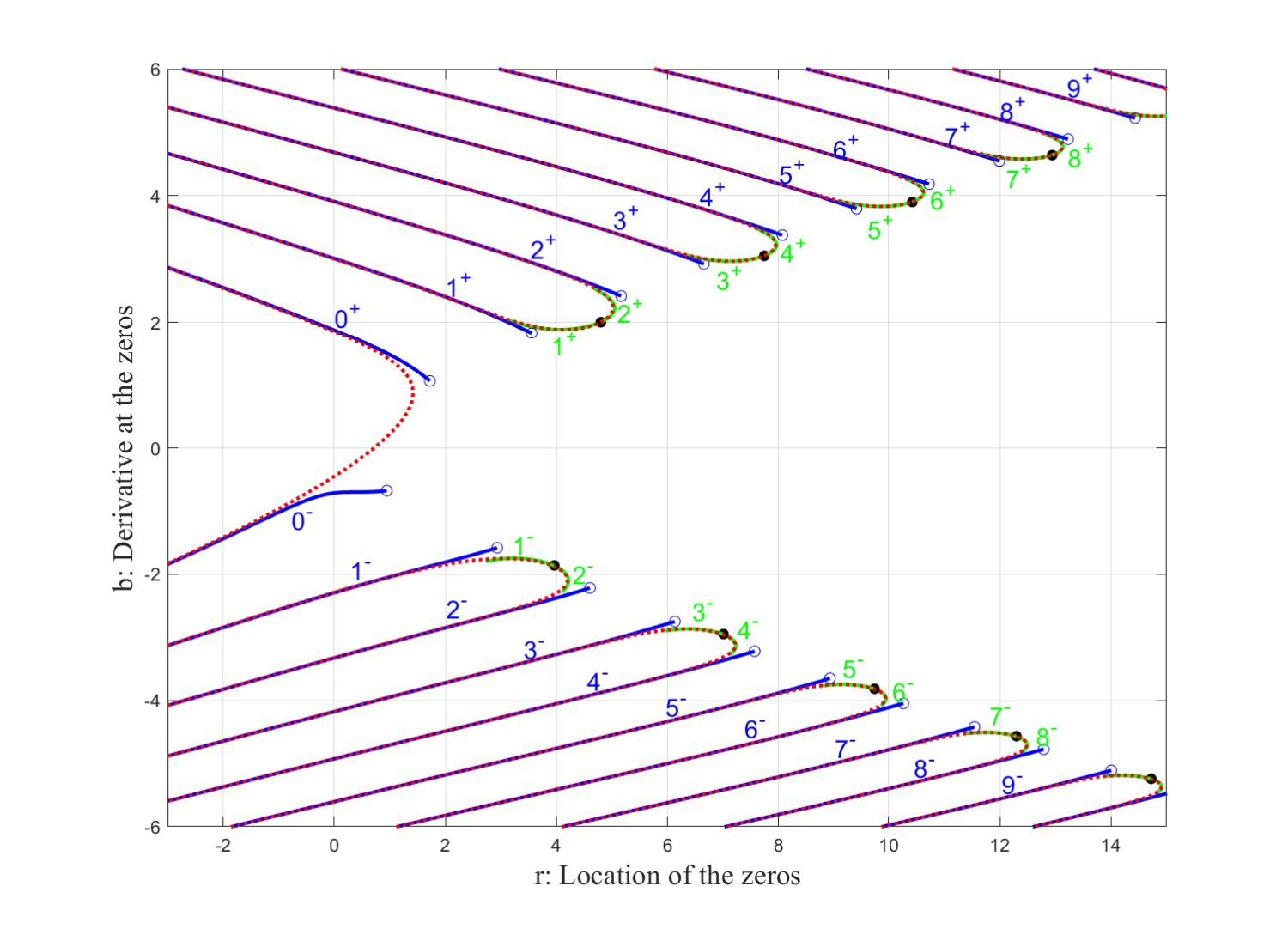}
\caption{
Asymptotic and numerical descriptions of curves \( \Sigma_{n}^{\pm} \) corresponding to separatrix solutions (type (B)).
The blue curves (labeled \( n^{\pm} \)) represent the asymptotic approximations of \( \Sigma_{n}^{\pm} \) given in Eq.~\eqref{eq-Sigma-n}, while the red dash-dotted curves represent numerically computed values of \( \Sigma_{n}^{\pm} \).
Each blue curve terminates at a blue circle due to the restriction \( A \in (-3/2^{2/3}, C_{0}) \) in the asymptotic expansion. The green curves show the asymptotic description of $\Sigma_{n}^{\pm}$ when $A$ is close to $C_{0}$; see Eq.~\eqref{eq-Sigma-n-extend}. All the green curves originate at a black point because $B_{1}\geq \Lambda_{0}$, where $\Lambda_{0}$ is defined in \eqref{eq-B1-restriction-def-Lambda0}.
}
\label{fig-classification-curves}
\end{figure}

\begin{remark}
When \( A \) is close to \( C_{0} \), the asymptotic description of the curves \( \Sigma_{n} \) given in Eq.~\eqref{eq-Sigma-n} does not match the numerical simulations uniformly; see the ``fingertips'' in Figure~\ref{fig-classification-curves}. This discrepancy arises because the asymptotic behavior of \( s_{0} \) described in Theorem~\ref{Thm-stokes-case-II} is not uniform in the joint limit \( \xi \to +\infty \) and \( A \to C_{0} \).

To capture the asymptotic behavior near the fingertips as \( n \to \infty \), we must instead use the asymptotic expansion of \( s_{0} \) from Theorem~\ref{Thm-stokes-case-IV}. From Eq.~\eqref{eq-stokes-full-asymptotic-case-IV}, we find that \( s_{0} = 0 \) implies
\begin{equation}
2\cos\left[2\xi\Im\alpha_{0}+2\sgn(b)\Im\alpha_{1}\right]=\exp\left[-2\Re\alpha_{1}\right]+\mathcal{O}(\xi^{-1})
\end{equation}
with $\alpha_{0}=\alpha_{0}(C_{0})$ and $\alpha_{1}=\alpha_{1}(C_{0})$.
Since $\Im\alpha_{0}=\frac{\pi\kappa^2}{4}<0$, then there exists a sequence $\{\widetilde{\xi}_{n}\}$ such that  $s_{0}(\widetilde{\xi}_{n})=0$ and
\begin{equation}\label{eq-approx-tilde-xi-n}
\begin{split}
\widetilde{\xi}_{2m-1}^{\pm}&=\frac{1}{2\Im\alpha_{0}}\left(\arccos\left\{\frac{1}{2}\exp[-2\Re\alpha_{1}]\right\}\mp 2\Im\alpha_{1}-2(m+m_{0})\pi\right)+\mathcal{O}(m^{-1}),\\
\widetilde{\xi}_{2m}^{\pm}&=\frac{1}{2\Im\alpha_{0}}\left(-\arccos\left\{\frac{1}{2}\exp[-2\Re\alpha_{1}]\right\}\mp 2\Im\alpha_{1}-2(m+m_{0})\pi\right)+\mathcal{O}(m^{-1})
\end{split}
\end{equation}
as $m\to\infty$.
Hence, the asymptotic description of the curves \( \Sigma_{2m-1} \) and \( \Sigma_{2m} \) can be extended near the fingertips as
\begin{equation}\label{eq-Sigma-n-extend}
\Sigma_{n}^{\pm}:
\left\{\begin{aligned}
&r^{\pm}:=\widetilde{r}_{n}^{\pm}(B_{1})\sim 2C_{0}\left(\widetilde{\xi}_{n}^{\pm}\right)^{\frac{4}{5}},\\
&b^{\pm}:=\widetilde{b}_{n}^{\pm}(B_{1})\sim \pm 2\left(\widetilde{\xi}_{n}^{\pm}\right)^{\frac{3}{5}}\left(1+\frac{B_{1}}{2\widetilde{\xi}_{n}^{\pm}}\right)
\end{aligned}\right.
\end{equation}
as $n\to\infty$. Since $\xi_{n}^{\pm}$ must be real, we conclude from \eqref{eq-explicit-represent-a1pha1} that
\begin{equation}
\frac{1}{2}\exp[-2\Re{\alpha_{1}}]=\frac{1}{2}\exp[-2(2B_{1}\Re{\alpha_{1,1}}+\Re{\alpha_{1,2}})]\leq 1,
\end{equation}
where $\alpha_{1,1}:=\alpha_{1,1}(C_{0}), \alpha_{1,2}:=\alpha_{1,2}(C_{0},\pm 1)$ are defined in \eqref{eq-def-alpha11-alpha12}. This implies that the parameter \( B_{1} \) must satisfy
\begin{equation}\label{eq-B1-restriction-def-Lambda0}
B_{1}\geq \Lambda_{0}^{\pm}:=\frac{-\log{2}-2\Re{\alpha_{1,2}(C_{0},\pm 1)}}{4\Re\alpha_{1,1}(C_{0})}
\end{equation}
in the approximation of $\widetilde{\xi}_{n}^{\pm}$ in \eqref{eq-approx-tilde-xi-n}.
Numerical simulations confirm that the asymptotic approximation in Eq.~\eqref{eq-Sigma-n-extend} is accurate when \( \Lambda_{0}^{\pm} \leq B_{1} \leq C \) for some fixed constant \( C > 0 \); see the green curves in Figure~\ref{fig-classification-curves}.
\end{remark}

From Corollary~\ref{cor-classification-1}, we also deduce that if the point \( (r, b) \) lies in the region between the curves \( \Sigma_{2m-1}^{+} \) and \( \Sigma_{2m}^{+} \) for some \( m \in \mathbb{N}^{+} \), then the corresponding PI solution is of type (A). Similarly, if \( (r, b) \) lies between \( \Sigma_{2m}^{+} \) and \( \Sigma_{2m+1}^{+} \) for some \( m \in \mathbb{N} \), then the solution is of type (C). An analogous classification holds for the regions between the curves \( \Sigma_{n}^{-} \).
These conclusions are corroborated by numerical simulations; see Figure~\ref{fig-classification-reigions}. It is also worth noting that oscillatory solutions exhibit two types of zeros: some are located in the pole region, while others are located in the oscillatory region. See Figure~\ref{fig-two-types-zeros} for illustrations of both types.

\begin{figure}[h]
\centering
\includegraphics[width=0.65\textwidth]{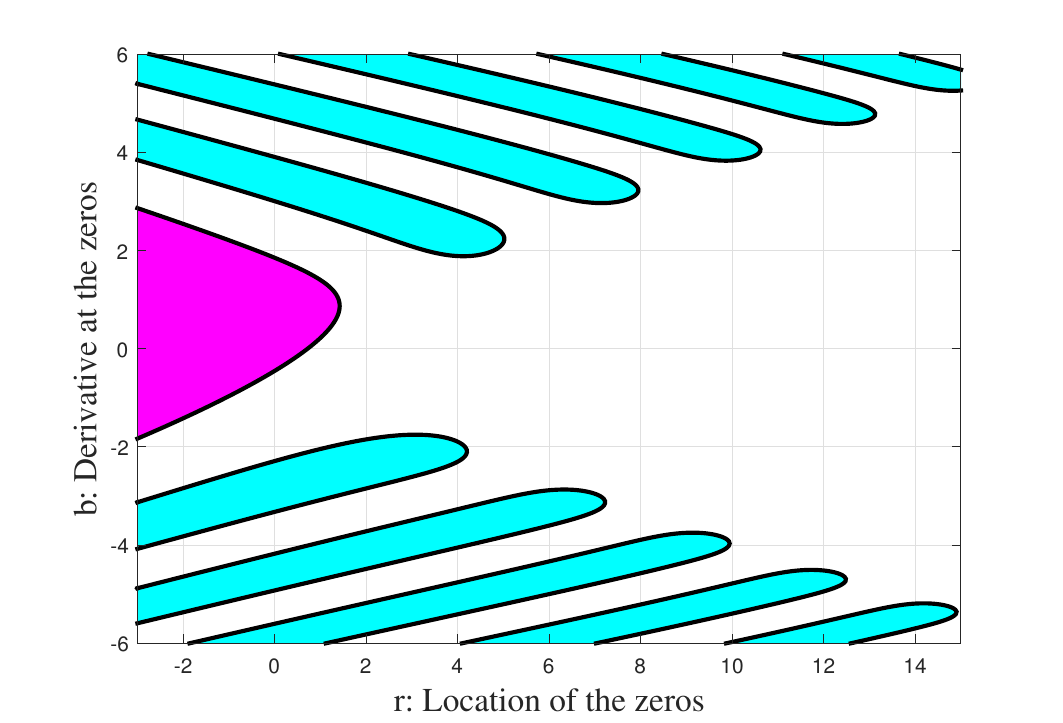}
\caption{
Phase diagram of real PI solutions in the \( (r, b) \)-plane.
The blue finger-like regions and the magenta region correspond to oscillatory solutions (type (A));
the boundaries of these regions represent separatrix solutions (type (B));
and the white regions correspond to singular solutions (type (C)).
}
\label{fig-classification-reigions}
\end{figure}

\begin{figure}[htp]
\centering
\includegraphics[width=0.68\textwidth]{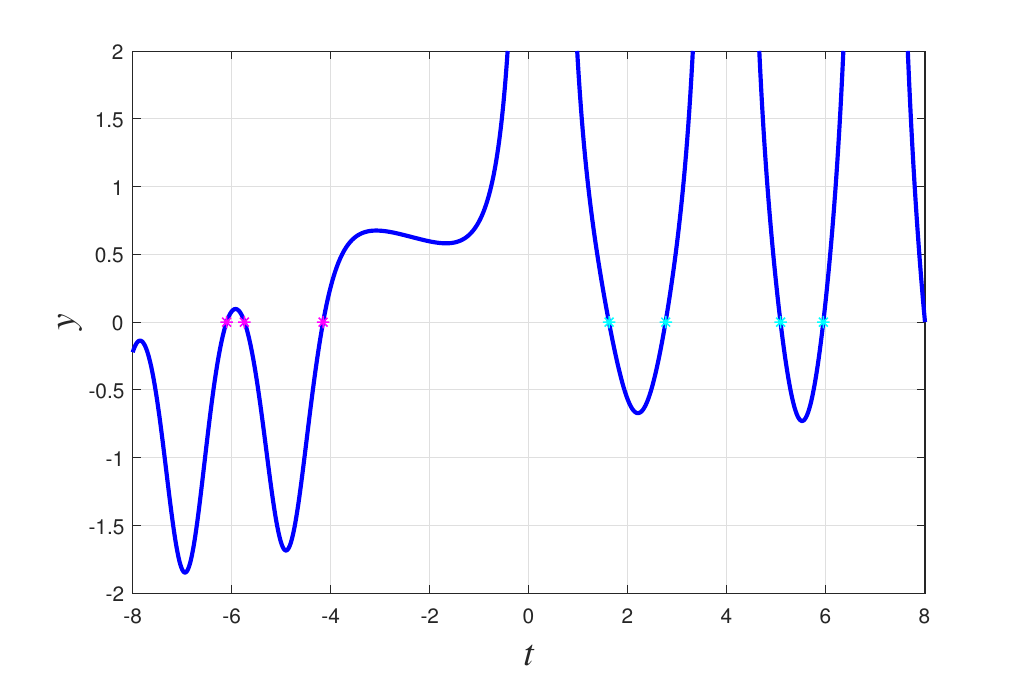}
\caption{
Two types of zeros of oscillatory solutions.
Cyan asterisks ({\color{cyan}$*$}) denote zeros located in the pole region, while magenta asterisks ({\color{magenta}$*$}) indicate zeros located in the oscillatory region.
}
\label{fig-two-types-zeros}
\end{figure}

\begin{remark}
In the above analysis, we have shown the existence of the curves \( \Sigma_{n}^{\pm} \) and provided their asymptotic descriptions as \( n \to \infty \). Theoretically, these descriptions involve two undetermined constants, \( n_{0} \) in Eq.~\eqref{eq-Sigma-n} and \( m_{0} \) in Eq.~\eqref{eq-Sigma-n-extend}. However, numerical simulations suggest that \( n_{0} = m_{0} = 0 \).
\end{remark}

A dramatic difference arises between the two case \( A < -3/2^{2/3} \) and \( A > C_{0} \). As clearly illustrated in Figure~\ref{fig-classification-reigions}, the region corresponding to \( A < -3/2^{2/3} \) (shown in magenta) gives rise to oscillatory solutions, while the region \( A > C_{0} \) (contained within the white blank area) corresponds to singular solutions. These observations are consistent with Theorems~\ref{Thm-stokes-case-I} and~\ref{Thm-stokes-case-III}, and lead to the following corollary.

\begin{corollary}
For any \( A < -3/2^{2/3} \), there exists a constant \( M_{1} > 0 \) such that for all \( \xi > M_{1} \), the PI solutions satisfying \( y(r) = 0 \), \( y'(r) = b \) are of type (A).
Similarly, for any \( A > C_{0} \), there exists a constant \( M_{2} > 0 \) such that for all \( \xi > M_{2} \), the corresponding PI solutions are of type (C).
\end{corollary}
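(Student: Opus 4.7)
The plan is to extract the corollary directly from Theorems~\ref{Thm-stokes-case-I} and~\ref{Thm-stokes-case-III}, combined with the classification~\eqref{eq-classifed-by-stokes}. Both assertions reduce to tracking the sign of $\Im s_{0}=1+s_{2}s_{3}$ as $\xi\to +\infty$, and the two cases admit largely parallel arguments.

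For $A > C_{0}$, I would apply Theorem~\ref{Thm-stokes-case-III} directly. The theorem asserts $\Re\alpha_{0}<0$, so the leading exponent $-4\Re\alpha_{0}\,\xi$ in the asymptotic $s_{0}\sim -i\exp\{-\sum_{s=0}^{\infty}4\Re\alpha_{s}/\xi^{s-1}\}$ is positive and grows linearly in $\xi$. Since the exponent of the exponential is manifestly real, $s_{0}$ is asymptotic to $-i$ times a large positive real number, whence $\Im s_{0}\to -\infty$. In particular $\Im s_{0}=1+s_{2}s_{3}<0$ for all $\xi>M_{2}$ with $M_{2}$ chosen sufficiently large, and~\eqref{eq-classifed-by-stokes} delivers type~(C).

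For $A<-3/2^{2/3}$, I would apply Theorem~\ref{Thm-stokes-case-I}. The crucial additional observation is that in this regime the cubic $z^{3}+Az+1$ has three real roots $z_{0}<0<z_{2}<z_{1}$, so the integration contour in~\eqref{eq-def-alpha0} defining $\alpha_{0}$ can be taken along the positive real ray $[z_{1},\infty)$ (the choice $\theta=0$). Along this ray the integrand $(s^{3}+As+1)^{1/2}-s^{3/2}-(A/2)s^{-1/2}$ is purely real, hence $\alpha_{0}\in\mathbb{R}$, and in fact $\alpha_{0}>0$ since Theorem~\ref{Thm-stokes-case-I} provides $\Re\alpha_{0}>0$. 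A parallel induction on the recursion~\eqref{eq-explicit-representation-a-s}, using that $f_{s}(z)$ and $\varphi_{s}(\zeta)$ remain real along the corresponding real rays in the $z$- and $\zeta$-planes, shows that every $\alpha_{s}\in\mathbb{R}$. Consequently $\exp\{-\sum_{s=0}^{\infty}2\alpha_{s}/\xi^{s-1}\}$ is a positive real number decaying to $0$, and $s_{0}\sim i\cdot(\text{positive real})$ yields $\Im s_{0}>0$ for all $\xi>M_{1}$; by~\eqref{eq-classifed-by-stokes}, the solution is of type~(A).

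The principal obstacle is the reality argument for the $\alpha_{s}$ in case~(i): one must verify that the $\zeta$-contour from $0$ to $+\infty$ corresponds via the conformal map $\zeta(z)$ to the real ray $[z_{1},\infty)$, and that no branch cut of $(s-z_{j})^{1/2}$, $s^{1/2}$, or $s^{-1/2}$ is crossed. Under the conventions $\arg(z-z_{0})\in(-\pi,\pi)$ and $\arg(z-z_{i})\in(-\pi/2,3\pi/2)$ for $i=1,2$, all cuts extend away from the rightward ray emanating from $z_{1}$, so the bookkeeping is routine but must be made explicit for the induction to close. Once this is in place, the sign analyses above finish the proof, and the constants $M_{1}, M_{2}$ are determined by truncating the asymptotic expansions at any order and comparing the leading exponentials with the error bound.
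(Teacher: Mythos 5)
Your proposal is correct and follows the route the paper itself intends: the corollary is presented without a separate proof, as an immediate consequence of Theorems~\ref{Thm-stokes-case-I} and~\ref{Thm-stokes-case-III} combined with the classification~\eqref{eq-classifed-by-stokes}. Your treatment of $A>C_{0}$ is exactly that argument, since the exponent in~\eqref{eq-stokes-full-asymptotic-case-III} contains only $\Re\alpha_{s}$ and is manifestly real, so $s_{0}\sim -i\cdot(\text{large positive})$ and $\Im s_{0}<0$ for $\xi$ large. The genuine added value of your write-up is the case $A<-3/2^{2/3}$: the exponent in~\eqref{eq-s0-case-I} involves the a priori complex $\alpha_{s}$, and the conclusion $\Im s_{0}>0$ for \emph{all} large $\xi$ does require that the phase of the exponent not rotate; your reality argument (taking $\theta=0$ in~\eqref{eq-def-alpha0} along the real ray from the largest real root $z_{1}$, plus induction through~\eqref{eq-explicit-representation-a-s}) supplies precisely this and is sound under the stated branch conventions, since $\zeta$ maps $[z_{1},\infty)$ onto $[0,\infty)$ and the $\varphi_{s}$ are real there. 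One caution: this reality claim conflicts with a literal reading of the identity $\Im\alpha_{0}=\tfrac{\pi}{4}\kappa^{2}$ together with the quoted sign $\kappa^{2}>0$ for $A<-3/2^{2/3}$; that identity is really only operative in the complex-conjugate-root regime $A>-3/2^{2/3}$ (where it is used in Corollary~\ref{cor-classification-1}), so you should state explicitly that you do not invoke it in case (i), or reconcile the sign conventions. A useful supplementary observation: by the symmetry~\eqref{eq-sk-s-k-relation} together with~\eqref{eq-constraints-stokes-multipliers}, real PI solutions have $s_{2}s_{3}=-|s_{2}|^{2}$, hence $s_{0}=i\left(1-|s_{2}|^{2}\right)$ is purely imaginary; this makes the final sign extraction immune to the complex $\mathcal{O}(\xi^{-2n})$ relative error in the matching, in both halves of the corollary.
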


\subsection{Full asymptotic expansions of $\hat{r}_{n}^{\pm}$ and $\hat{b}_{n}^{\pm}$ for specific PI solutions}

In this subsection, we address the connection problem between the large negative asymptotic behavior of PI solutions and the zero parameters \( (r, b) \) for specific solutions. We focus on zeros located in the pole region. Let \( \hat{r}_{n}^{\pm} \) be defined as follows:
\begin{enumerate}
    \item[(i)] For a given type (A) or type (B) solution, \( \hat{r}_{n}^{-} \) and \( \hat{r}_{n}^{+} \) denote the locations of the two zeros situated between the \( n \)-th and \( (n+1) \)-th poles, counted from left to right;
    \item[(ii)] For a given type (C) solution, \( \hat{r}_{n}^{-} \) and \( \hat{r}_{n}^{+} \) denote the \( (2n - 1) \)-th and \( 2n \)-th zeros, respectively, also counted from left to right.
\end{enumerate}
Correspondingly, \( \hat{b}_{n}^{\pm} \) denotes the value of the derivative \( y'(t) \) at \( t = \hat{r}_{n}^{\pm} \).
We aim to derive the full asymptotic expansions of \( \hat{r}_{n}^{\pm} \) and \( \hat{b}_{n}^{\pm} \) as \( n \to \infty \), expressed in terms of the parameters \( d \), \( \theta \), \( h \), \( \rho \), and \( \sigma \). These parameters appear in the asymptotic descriptions of PI solutions as \( t \to -\infty \); see Eqs.~\eqref{eq-behavior-type-A}--\eqref{eq-parameter-rho-sigma}. By observing the connection formulas \eqref{eq-parameter-d-theta}, \eqref{eq-h-s1-s-1}, and \eqref{eq-parameter-rho-sigma}, which relate these parameters to the Stokes multipliers, we conclude that it suffices to construct the full asymptotic expansions of \( \hat{r}_{n} \) and \( \hat{b}_{n} \) in terms of the corresponding Stokes multipliers.

The main idea for establishing the full asymptotic expansions of \( \hat{r}_{n}^{\pm} \) and \( \hat{b}_{n}^{\pm} \) is to resolve the coefficients \( B_{s} \) and the parameter \( \xi \) from Theorem~\ref{Thm-stokes-case-IV}, and substitute the resulting expressions into
\begin{equation}
r = 2C_{0}\xi^{4/5}, \qquad b = \pm 2\xi^{3/5} \left(1 + \sum_{s=1}^{\infty} \frac{B_{s}}{\xi^{s}} \right)^{\frac{1}{2}}.
\end{equation}
Indeed, according to the asymptotics in Eq.~\eqref{eq-stokes-full-asymptotic-case-IV}, we have
\begin{equation}
|s_{1}|\sim \exp\left\{\sum\limits_{s=0}^{\infty}\frac{2\Re\alpha_{s+1}}{\xi^{s}}\right\},\qquad \xi\to+\infty.
\end{equation}
For a specific PI solution, the Stokes multiplier \( s_{1} \) is fixed and explicitly known. Therefore, it follows that
\begin{equation}
4B_{1}\Re\alpha_{1,1}+2\Re\alpha_{1,2}=\log\left(|s_{1}|\right),\quad \Re\alpha_{s}=0.
\end{equation}
Combining it with the expression of $\alpha_{s}$  in Eq.~\eqref{eq-facterization-alphas}, we can explicitly solve for the coefficients \( B_{s} \) as
\begin{equation}\label{eq-solve-Bs-by-s1}
\begin{split}
B_{1}&=\frac{\log\left(|s_{1}|\right)-2\Re\alpha_{1,2}}{4\Re\alpha_{1,1}},\\
B_{s}&=-\frac{\Re\alpha_{s,2}}{\alpha_{1,1}}, \qquad s\geq 2,
\end{split}
\end{equation}
where \( \alpha_{s,1} \) and \( \alpha_{s,2} \) are defined in Eq.~\eqref{eq-def-alphas1-alphas2} with \( A = C_{0} \).
Furthermore, from Theorem~\ref{Thm-stokes-case-IV} and the fact that \( \Im \alpha_{0} < 0 \), we obtain
\begin{equation}
-2(n+n_{1})\pi+\arg{s_{1}}-\frac{\pi}{2}\sim 2\xi\Im\alpha_{0}+2\Im\alpha_{1}+\sum\limits_{s=1}^{\infty}\frac{2\Im\alpha_{s+1}}{\xi^{s}},\quad \xi\to+\infty.
\end{equation}
for some \( n_{1} \in \mathbb{Z} \). Thus, for any fixed \( s_{1} \), there exist two increasing sequences \( \{ \xi_{n}^{+} \} \) and \( \{ \xi_{n}^{-} \} \) such that
\begin{equation}\label{eq-full-asym-expan-xin-by-s1}
\xi:=\xi_{n}^{\pm}\sim \frac{N^{\pm}-\arg{s_1}}{-2\Im\alpha_{0}}\left[1+\sum\limits_{s=2}^{\infty}\frac{\lambda_{s}^{\pm}}{(N^{\pm}-\arg{s_1})^{s}}\right], \quad\text{as } n\to\infty,
\end{equation}
where
\begin{equation}\label{eq-def-N}
\begin{split}
N^{\pm}&=2(n+n_{1})\pi+\frac{\pi}{2}+2\Im\alpha_{1}\\
&=2(n+n_{1})\pi+\frac{\pi}{2}+4B_{1}\Im\alpha_{1,1}(C_{0})+ 2\Im\alpha_{1,2}(C_{0},\pm 1).
\end{split}
\end{equation}
and the coefficients \( \lambda_{s}^{\pm} \) are expressible in terms of \( \Im \alpha_{j} \), for \( j = 0, 1, \ldots, s \).
By substituting Eqs.~\eqref{eq-solve-Bs-by-s1} and~\eqref{eq-full-asym-expan-xin-by-s1} into the expressions for \( r \) and \( b \), we obtain the full asymptotic expansions of \( \hat{r}_{n}^{\pm} \) and \( \hat{b}_{n}^{\pm} \) as \( n \to \infty \). This leads to the following corollary.

\begin{corollary}\label{cor-connection-PI-zero-data}
For any specific PI solution, the asymptotic behavior of the zero parameters \( \hat{r}_{n}^{\pm} \) and \( \hat{b}_{n}^{\pm} \) as \( n \to \infty \) is given by:
\begin{equation}\label{eq-full-expansion-rn-bn}
\begin{split}
\hat{r}_{n}^{\pm}\sim &2C_{0}\left(\frac{N^{\pm}-\arg{s_1}}{-2\Im\alpha_{0}}\right)^{\frac{4}{5}}\left[1+\sum\limits_{s=2}^{\infty}\frac{\gamma_{s}^{\pm}}{(N^{\pm}-\arg{s_1})^{s}}\right] \\
\hat{b}_{n}^{\pm}\sim &\pm2\left(\frac{N^{\pm}-\arg{s_1}}{-2\Im\alpha_{0}}\right)^{\frac{3}{5}}\left[1+\frac{-B_{1}\Im\alpha_{0}}{N^{\pm}-\arg{s_{1}}}+\sum\limits_{s=2}^{\infty}\frac{\beta_{s}^{\pm}}{(N^{\pm}-\arg{s_1})^{s}}\right]
\end{split}
\end{equation}
for some $n_{1}\in\mathbb{Z}$, where $N^{\pm}$ is stated in \eqref{eq-def-N}, $\alpha_{0}$ is defined in \eqref{eq-def-alpha0} with $A=C_{0}$ and $B_{1}$ is given in \eqref{eq-solve-Bs-by-s1}.
Moreover, the coefficients \( \gamma_{s}^{\pm} \) and \( \beta_{s}^{\pm} \) in \eqref{eq-full-expansion-rn-bn} are explicitly determined by the coefficients \( \lambda_{j}^{\pm} \) from Eq.~\eqref{eq-full-asym-expan-xin-by-s1}, for \( j = 2, 3, \dots, s \).

The values of \( |s_{1}| \) and \( \arg s_{1} \) are specified below according to the solution type:
\begin{enumerate}[(i)]
\item  For type (A) solutions, {\it i.e.} when $\Im s_{0}=1+s_{2}s_{3}>0$, we have
%\begin{equation}\label{eq-|s1|-by-s2-A}
%|s_{1}|=\frac{\sqrt{[1-|s_{2}|\cos(\frac{\pi}{2}-\arg{s_{2}})]^{2}+|s_{2}|^2\sin^{2}(\frac{\pi}{2}-\arg{s_{2}})}}{|s_{0}|}
%\end{equation}
\begin{equation}\label{eq-|s1|-by-s2-A}
|s_{1}|=\frac{\sqrt{1-2|s_{2}|\cos(\frac{\pi}{2}-\arg{s_{2}})+|s_{2}|^2}}{|s_{0}|}
\end{equation}
and
\begin{equation}\label{eq-args1-by-s2-A}
\arg{s_{1}}-\frac{\pi}{2}=\arctan\left(\frac{-|s_{2}|\sin(\frac{\pi}{2}-\arg{s_{2}})}{1-|s_{2}|\cos(\frac{\pi}{2}-\arg{s_{2}})}\right);
\end{equation}

\item  For type (B) solutions, {\it i.e.} when $\Im s_{0}=(1+s_{2}s_{3})=0$, we have
\begin{equation}\label{eq-|s1|-by-h}
|s_{1}|=\frac{\sqrt{h^2+1}}{2}
\end{equation}
and
\begin{equation}\label{eq-args1-by-h}
\arg{s_{1}}-\frac{\pi}{2}=\arctan(-h);
\end{equation}

\item  For type (C) solutions, {\it i.e.} when $\Im s_{0}=(1+s_{2}s_{3})<0$, we have
%\begin{equation}\label{eq-|s1|-by-s2-C}
%|s_{1}|=\frac{\sqrt{[|s_{2}|\cos(\frac{\pi}{2}-\arg{s_{2}})-1]^{2}+|s_{2}|^2\sin^{2}(\frac{\pi}{2}-\arg{s_{2}})}}{|s_{0}|}
%\end{equation}
\begin{equation}\label{eq-|s1|-by-s2-C}
|s_{1}|=\frac{\sqrt{1-2|s_{2}|\cos(\frac{\pi}{2}-\arg{s_{2}})+|s_{2}|^2}}{|s_{0}|}
\end{equation}
and
\begin{equation}\label{eq-args1-by-s2-C}
\arg{s_{1}}-\frac{\pi}{2}=\left\{\begin{aligned}
&\arctan\left(\frac{|s_{2}|\sin(\frac{\pi}{2}-\arg{s_{2}})}{|s_{2}|\cos(\frac{\pi}{2}-\arg{s_{2}})-1}\right), && |s_{2}|\cos(\frac{\pi}{2}-\arg{s_{2}})>1,\\
&\pi+\arctan\left(\frac{|s_{2}|\sin(\frac{\pi}{2}-\arg{s_{2}})}{|s_{2}|\cos(\frac{\pi}{2}-\arg{s_{2}})-1}\right), && |s_{2}|\cos(\frac{\pi}{2}-\arg{s_{2}})<1.
\end{aligned}\right.
\end{equation}
\end{enumerate}
\end{corollary}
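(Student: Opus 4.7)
The plan is to implement rigorously the recipe sketched in the two paragraphs preceding the corollary, and then to supplement it with the type-specific computations of $|s_{1}|$ and $\arg s_{1}$. I would first split the asymptotic expansion of $s_{1}$ from Theorem~\ref{Thm-stokes-case-IV} into its modulus and argument. Matching $\log|s_{1}|$ against $\sum_{s\geq 0}2\Re\alpha_{s+1}\xi^{-s}$ coefficient by coefficient, and invoking the factorization $\alpha_{s}=2B_{s}\alpha_{s,1}+\alpha_{s,2}$ from \eqref{eq-facterization-alphas} together with the fact that $\alpha_{s,2}$ depends only on $B_{1},\ldots,B_{s-1}$, recursively fixes each $B_{s}$ and produces \eqref{eq-solve-Bs-by-s1}. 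Matching the argument $\arg s_{1}-\tfrac{\pi}{2}$ modulo $2\pi$ against $\sum_{s\geq -1}2\Im\alpha_{s+1}\xi^{-s}$ yields the transcendental equation
\begin{equation*}
2\xi\,\Im\alpha_{0}+2\Im\alpha_{1}+\sum_{s\geq 1}\frac{2\Im\alpha_{s+1}}{\xi^{s}}\;=\;\arg s_{1}-\tfrac{\pi}{2}-2(n+n_{1})\pi,
\end{equation*}
whose leading linear part in $\xi$ is monotone because $\Im\alpha_{0}<0$. An elementary iterative inversion then produces, for each sign and each large branch index $n$, a unique large-$\xi$ solution $\xi=\xi_{n}^{\pm}$ admitting the formal series \eqref{eq-full-asym-expan-xin-by-s1}.

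Next, I would substitute \eqref{eq-solve-Bs-by-s1} and \eqref{eq-full-asym-expan-xin-by-s1} into $\hat r=2C_{0}\xi^{4/5}$ and $\hat b=\pm 2\xi^{3/5}\bigl(1+\sum_{s=1}^{\infty}B_{s}\xi^{-s}\bigr)^{1/2}$, expanding the fractional powers $\xi^{4/5}$, $\xi^{3/5}$ and the square root in binomial series and re-grouping in decreasing powers of $N^{\pm}-\arg s_{1}$. This purely algebraic step yields \eqref{eq-full-expansion-rn-bn}; in particular, the coefficient $-B_{1}\Im\alpha_{0}$ in the leading correction to $\hat b_{n}^{\pm}$ arises from combining the leading term $B_{1}/\xi$ inside the square root with the leading behaviour $\xi\sim (N^{\pm}-\arg s_{1})/(-2\Im\alpha_{0})$.

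It remains to compute $|s_{1}|$ and $\arg s_{1}$ in each of the three classes (A), (B), (C). Using the reality condition $s_{-k}=-\overline{s_{k}}$ (already built into Theorems~\ref{Thm-stokes-case-II}--\ref{Thm-stokes-case-IV}) together with the standard cyclic monodromy relations for PI recalled in Appendix~\ref{sec:AppA}, I would eliminate $s_{-1}$ and $s_{3}$ in favour of $s_{0}$ and $s_{2}$ for types (A) and (C), and in favour of $h=s_{1}-s_{-1}=2\Re s_{1}$ together with the constraint $|s_{2}|=1$ (which follows from $1+s_{2}s_{3}=0$) for type (B). Routine trigonometric manipulation then delivers \eqref{eq-|s1|-by-s2-A}--\eqref{eq-args1-by-s2-C}; the only subtlety is the branch selection of $\arctan$ in (C), which is dictated by the sign of $|s_{2}|\cos(\tfrac{\pi}{2}-\arg s_{2})-1$, i.e.\ by the sign of the denominator appearing inside the arctangent.

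The principal obstacle I expect is justifying the two inversion steps rigorously. One must verify that the coefficient-matching procedure for $\log|s_{1}|$ uniquely determines each $B_{s}$ (this relies on $\Re\alpha_{1,1}(C_{0})\neq 0$ and on the fact that $\alpha_{s,2}$ involves only $B_{1},\ldots,B_{s-1}$), and that the transcendental equation for $\xi$ has, for every sufficiently large $n_{1}$, a \emph{genuine} sequence $\xi_{n}^{\pm}$ whose true asymptotics agree with the formally inverted series, with remainder estimates that survive composition with the fractional powers and the square root. Once these two inversions are secured, the remainder of the proof is purely algebraic book-keeping.
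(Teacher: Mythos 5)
Your proposal follows essentially the same route as the paper: the derivation of $B_{s}$ and $\xi_{n}^{\pm}$ by matching modulus and argument of $s_{1}$ against Theorem~\ref{Thm-stokes-case-IV}, followed by substitution into $r=2C_{0}\xi^{4/5}$ and $b=\pm2\xi^{3/5}(1+\sum_{s}B_{s}\xi^{-s})^{1/2}$, is exactly the recipe the paper carries out just before the corollary, and the type-by-type evaluation of $|s_{1}|$ and $\arg s_{1}$ via the reality relation $s_{-k}=-\overline{s_{k}}$ and the cyclic constraints (with $s_{0}=0$, $s_{2}=s_{3}=i$, $s_{1}=(h+i)/2$ in case (B), and the arctangent branch in case (C) fixed by the sign of $1-|s_{2}|\cos(\tfrac{\pi}{2}-\arg s_{2})$) coincides with the paper's proof. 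Your closing caveat about rigorously justifying the two formal inversions is reasonable but goes beyond what the paper itself does, so it does not mark a gap relative to the published argument.
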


\begin{proof}
To prove the corollary, it suffices to verify the formulas \eqref{eq-|s1|-by-s2-A}–\eqref{eq-args1-by-s2-C} for \( |s_{1}| \) and \( \arg s_{1} \) in each of the three solution types.

\smallskip
\noindent
\textbf{Type (A).} From the identities \( s_{2} = -\overline{s_{3}} \) and \( s_{3} = s_{-2} = i(1 + s_{0}s_{1}) \) and the condition \( \Im s_{0} > 0 \), we have
\begin{equation}
s_{1} = \frac{s_{3} - i}{i s_{0}} = i \frac{|s_{3}| e^{i(\arg s_{3} - \frac{\pi}{2})} - 1}{-|s_{0}|}
= i \frac{1 - |s_{2}| e^{i(\frac{\pi}{2} - \arg s_{2})}}{|s_{0}|}.
\end{equation}
This leads directly to Eqs.~\eqref{eq-|s1|-by-s2-A} and \eqref{eq-args1-by-s2-A}, using the fact that \( 1 - |s_{2}| \cos(\tfrac{\pi}{2} - \arg s_{2}) > 0 \) in this case.

\smallskip
\noindent
\textbf{Type (B).} From \eqref{eq-classifed-by-stokes} and \eqref{eq-sk-s-k-relation}, we know that\( s_{0} = 0 \). Then it follows from the constraint relation \eqref{eq-constraints-stokes-multipliers} that \( s_{2} = s_{3} = i \), and further that \( s_{1} + s_{-1} = i \). Using the fact that \( s_{1} - s_{-1} = h \), we solve:
\[
s_{1} = \frac{h + i}{2} = i \frac{1 - hi}{2}.
\]
This immediately yields Eqs.~\eqref{eq-|s1|-by-h} and \eqref{eq-args1-by-h}.

\smallskip
\noindent
\textbf{Type (C).} The analysis is similar to the type (A) case, except that \( \Im s_{0} < 0 \), and the quantity \( 1 - |s_{2}| \cos(\tfrac{\pi}{2} - \arg s_{2}) \) may be negative. As such, when determining \( \arg s_{1} \), we must carefully choose the correct branch of the argument to ensure continuity. This leads to the two cases in Eq.~\eqref{eq-args1-by-s2-C}.
\end{proof}

Corollary \ref{cor-connection-PI-zero-data} resolves the connection problem for the PI equation by relating the large negative asymptotics to the local asymptotic behavior \eqref{eq-behavior-at-zero} near large positive zeros. For a given solution of the PI equation, specifying either the parameters $(d, \theta)$ in \eqref{eq-parameter-d-theta}, the constant $h$ in \eqref{eq-h-s1-s-1}, or the pair $(\rho, \sigma)$ in \eqref{eq-parameter-rho-sigma}, explicitly determines the corresponding Stokes multipliers. Then, by Corollary \ref{cor-connection-PI-zero-data}, one obtains the asymptotic behavior of the zero data associated with the large positive zeros of this solution.

\begin{table}[htp]
  \centering
  \caption{Comparison between the asymptotic and numerical values of $\hat{r}_{n}^{+}$ and $\hat{b}_{n}^{+}$ for the real tritronqu\'{e}e PI solution with $s_{0}=s_{1}=s_{-1}=i, s_{2}=s_{-2}=0$}\label{table-comparison-rn-bn-type-A}
  \begin{tabular}{c|cc|cc|cc}
  \toprule
\#	&   \multicolumn{2}{c|}{asymptotic values}		&\multicolumn{2}{c|}{numerical values} 	& \multicolumn{2}{c}{relative  errors}   \\
\midrule
$n$	& $\hat{r}_{n}^{+}\quad~$ & $\hat{b}_{n}^{+}$		& $\hat{r}_{n}^{+}\quad~$	& $\hat{b}_{n}^{+}$		& $\Delta \hat{r}_{n}^{+}/\hat{r}_{n}^{+}$  		& $\Delta \hat{b}_{n}^{+}/\hat{b}_{n}^{+}$	  \\
\hline
 1	& $ 4.512112$ & $2.092355$ & $ 4.482589 $ & $2.095210$ & $0.006543$	& $-0.001365$\\
 2	& $ 7.488588$ & $3.123150$ & $ 7.473984$ & $3.124409$ & $0.001950$	& $-0.000403$\\
 3	& $ 10.187133$ & $3.962947$ & $ 10.177384$ & $3.963772$ & $0.000957$	& $-0.000208$\\
 4	& $12.715558$ & $4.697489$ & $12.708160$ & $4.698113$ & $0.000582$	& $-0.000133$\\
 5	& $15.123217$ & $5.362188$	& $15.117192$ & $5.362694$ & $0.000398$	& $-0.000094$\\
\bottomrule
  \end{tabular}
\end{table}

\begin{table}[htp]
  \centering
  \caption{Comparison between the asymptotic and numerical values of $\hat{r}_{n}^{+}$ and $\hat{b}_{n}^{+}$ for the real PI solution with $s_{k}=-2i\cos{\frac{\pi}{5}}, k=0,1,2,3,4$.}\label{table-comparison-rn-bn-type-C}
  \begin{tabular}{c|cc|cc|cc}
  \toprule
\#	&   \multicolumn{2}{c|}{asymptotic values}		&\multicolumn{2}{c|}{numerical values} 	& \multicolumn{2}{c}{relative  errors}   \\
\midrule
$n$	& $\hat{r}_{n}^{+}$ & $\hat{b}_{n}^{+}$		& $\hat{r}_{n}^{+}$	& $\hat{b}_{n}^{+}$		& $\Delta \hat{r}_{n}^{+}/\hat{r}_{n}^{+}$  		& $\Delta \hat{b}_{n}^{+}/\hat{b}_{n}^{+}$	\\ \hline
 1	& $ 2.599108$ & $1.507663$ & $ 2.575996$ & $1.527218$ & $0.008892$	& $-0.012971$\\
 2	& $ 5.857157$ & $2.699281$ & $ 5.850983$ & $2.704178$ & $0.001054$	& $-0.001814$\\
 3	& $ 8.691611$ & $3.607257$ & $ 8.688664$ & $3.609591$ & $0.000339$	& $-0.000647$\\
 4	& $11.307599$ & $4.382464$ & $11.305909$ & $4.383839$ & $0.000149$	& $-0.000313$\\
 5	& $13.778912$ & $5.075135$	& $13.777867$ & $5.076029$ & $0.000076$	& $-0.000176$\\
\bottomrule
  \end{tabular}
\end{table}

\begin{remark}
Although there is an undetermined constant $n_{1}$ in \eqref{eq-full-expansion-rn-bn}, numerical simulations confirm that $n_{1} = 0$. Table \ref{table-comparison-rn-bn-type-A} presents a comparison between the numerical and asymptotic values of $\hat{r}_{n}^{+}$ and $\hat{b}_{n}^{+}$ for the real tritronqu\'ee solution, which can be regarded as a special case of type (A). The numerical values of $\hat{r}_{n}^{+}$ and $\hat{b}_{n}^{+}$ are obtained using an improved Runge–Kutta method implemented in MATLAB, while their asymptotic values are taken from \eqref{eq-full-expansion-rn-bn}, omitting the high-order terms with $s \geq 2$.
In Table \ref{table-comparison-rn-bn-type-C}, we compare the numerical and asymptotic values of $\hat{r}_{n}^{+}$ and $\hat{b}_{n}^{+}$ for a specific type (C) solution with $(p, H) = (0, 0)$, where $p$ is a pole of the solution and $H$ is the parameter in the following Laurent expansion near $t = p$:
\begin{equation}\label{eq-Laurent-series}
y(t) = \frac{1}{(t - p)^2} - \frac{p}{10}(t - p)^2 - \frac{1}{6}(t - p)^3 + H(t - p)^4 + \cdots.
\end{equation}
The Stokes multipliers of this solution are given by $s_{k} = -2i \cos{\frac{\pi}{5}}$ for $k = 0, 1, 2, 3, 4$; see \cite{Long-Xia}.
\end{remark}

\section{Asymptotic matching and proof of Theorems \ref{Thm-stokes-case-I}-\ref{Thm-stokes-case-IV}}
\label{sec:proof-theorem}

In this section, we rigorously derive the full asymptotic expansions of the Stokes multipliers $\{s_k\}$ as $\xi \to +\infty$, thereby completing the proofs of Theorems~\ref{Thm-stokes-case-I}--\ref{Thm-stokes-case-IV}. Our approach is based on asymptotic matching between the uniform approximation of the solution $W(\zeta,\xi)$ given in Lemma~\ref{lem-higher-order-approximation-ODE-1} and the canonical solutions $\{\widehat{\Phi}_k\}$ characterized by the expansions in~\eqref{eq-canonical-solutions-hat-Phi}.

Two key analytical ingredients underpin our strategy. First, the Airy-type asymptotics for $W(\zeta,\xi)$ provided by Lemma~\ref{lem-higher-order-approximation-ODE-1} are uniformly valid across adjacent Stokes sectors. Second, the associated error terms decay sufficiently fast, allowing for the extraction of precise higher-order corrections. These features, combined with the classical Stokes phenomenon for the Airy function, facilitate the systematic derivation of the full asymptotic structure of the Stokes multipliers.

Although our methodology builds upon the framework developed in earlier works~\cite{Long-Jiang-Li, LongLi, LongLiWang}, the current analysis introduces several technical refinements.
For brevity and to avoid duplication, we omit standard derivations and focus on the novel aspects of the computation. Readers interested in foundational techniques and auxiliary arguments are referred to~\cite{Long-Jiang-Li, LongLi, LongLiWang}.

\

\textbf{Proof of Theorem \ref{Thm-stokes-case-I}}

According to Figure~\ref{Figure-stokes-geometry}, when $A < -3/2^{2/3}$, we can apply Lemma~\ref{lem-higher-order-approximation-ODE-1} to obtain the uniform asymptotic behavior of $W$ in a neighborhood of the turning point $z = z_1$ and along the adjacent Stokes curves $\gamma_0$ and $\gamma_1$ that emanate from it. This facilitates the asymptotic computation of the Stokes multiplier $s_0$.

We begin by analyzing the asymptotic expansions of $\Ai_n(\xi^{2/3}\zeta)$ and $\Bi_n(\xi^{2/3}\zeta)$, and matching these with the canonical solutions $\widehat{\Phi}_k$ of the Lax pair. Recall the well-known asymptotics of the Airy functions for large complex arguments (see~\cite[Eqs.~(9.2.12), (9.7.5), (9.2.10)]{NIST-handbook}):
\begin{eqnarray}\label{eq-asym-Ai}
\begin{cases}
\Ai(z)\sim
\frac{1}{2\sqrt{\pi}}z^{-\frac{1}{4}}e^{-\frac{2}{3}z^{\frac{3}{2}}},
\quad & \arg{z}\in(-\pi,\pi),\\
\Ai(z)\sim
\frac{1}{2\sqrt{\pi}}z^{-\frac{1}{4}}e^{-\frac{2}{3}z^{\frac{3}{2}}}
+\frac{i}{2\sqrt{\pi}}z^{-\frac{1}{4}}e^{\frac{2}{3}z^{\frac{3}{2}}},
\quad & \arg{z}\in\left(\frac{\pi}{3},\frac{5\pi}{3}\right),\\
\Ai(z)\sim
\frac{1}{2\sqrt{\pi}}z^{-\frac{1}{4}}e^{-\frac{2}{3}z^{\frac{3}{2}}}
-\frac{i}{2\sqrt{\pi}}z^{-\frac{1}{4}}e^{\frac{2}{3}z^{\frac{3}{2}}},
\quad &\arg{z}\in\left(-\frac{5\pi}{3},-\frac{\pi}{3}\right)
\end{cases}
\end{eqnarray}
and
\begin{eqnarray}\label{eq-asym-Bi}
\begin{cases}
\Bi(z)\sim
\frac{i}{2\sqrt{\pi}}z^{-\frac{1}{4}}e^{-\frac{2}{3}z^{\frac{3}{2}}}
+\frac{1}{\sqrt{\pi}}z^{-\frac{1}{4}}e^{\frac{2}{3}z^{\frac{3}{2}}},
\quad&\arg{z}\in\left(-\frac{\pi}{3},\pi\right),\\
\Bi(z)\sim
\frac{i}{2\sqrt{\pi}}z^{-\frac{1}{4}}e^{-\frac{2}{3}z^{\frac{3}{2}}}
+\frac{1}{2\sqrt{\pi}}z^{-\frac{1}{4}}e^{\frac{2}{3}z^{\frac{3}{2}}},
\quad & \arg{z}\in\left(\frac{\pi}{3},\frac{5\pi}{3}\right),\\
\Bi(z)\sim
-\frac{i}{2\sqrt{\pi}}z^{-\frac{1}{4}}e^{-\frac{2}{3}z^{\frac{3}{2}}}
+\frac{1}{\sqrt{\pi}}z^{-\frac{1}{4}}e^{\frac{2}{3}z^{\frac{3}{2}}},
\quad & \arg{z}\in\left(-\pi,\frac{\pi}{3}\right).
\end{cases}
\end{eqnarray}

For $\lambda \to \infty$ with $\arg \lambda \sim \pi/5$, we use the relation $\lambda = \xi^{2/5} z$ and note from~\eqref{eq-asymp-relation-zeta-z} that $\arg \zeta \sim \pi/3$. Substituting \eqref{eq-asymp-relation-zeta-z} into~\eqref{eq-asym-Ai} and~\eqref{eq-asym-Bi}, and using the relation $\lambda = \xi^{2/5} z$ again, we obtain (for $n \in \mathbb{N}^{+}$):
\begin{eqnarray}\label{eq-Ai-Bi-pi/5}
\begin{cases}
\left(\frac{\zeta}{f(z)}\right)^{\frac{1}{4}}\Ai_{n}(\xi^{\frac{2}{3}}\zeta)=\left(\frac{\zeta}{f(z)}\right)^{\frac{1}{4}}\left(\frac{d\widehat{\zeta}}{d\zeta}\right)^{-\frac{1}{2}}\Ai\left(\xi^{\frac{2}{3}}\widehat{\zeta}\right)\sim c_{1}\frac{-1}{\sqrt{2}}\lambda^{-\frac{3}{4}}e^{-\frac{4}{5}\lambda^{\frac{5}{2}}},\\
\left(\frac{\zeta}{f(z)}\right)^{\frac{1}{4}}\Bi_{n}(\xi^{\frac{2}{3}}\zeta)=\left(\frac{\zeta}{f(z)}\right)^{\frac{1}{4}}\left(\frac{d\widehat{\zeta}}{d\zeta}\right)^{-\frac{1}{2}}\Bi\left(\xi^{\frac{2}{3}}\widehat{\zeta}\right)\sim ic_{1}\frac{-1}{\sqrt{2}}\lambda^{-\frac{3}{4}}e^{-\frac{4}{5}\lambda^{\frac{5}{2}}}+2c_{2}\frac{1}{\sqrt{2}}\lambda^{-\frac{3}{4}}e^{\frac{4}{5}\lambda^{\frac{5}{2}}}
\end{cases}
\end{eqnarray}
as $\lambda\to\infty$ (and $\zeta\to\infty$ accordingly), where the constants $c_1$, $c_2$ are defined by:
\begin{equation}\label{eq-def-c1-c2}
\begin{split}
c_{1}&=\frac{-1}{2\sqrt{\pi}}\xi^{-\frac{2}{15}}\exp\left\{-\sum\limits_{s=0}^{n}\alpha_{s}\xi^{-s+1}\right\},\\
c_{2}&=\frac{1}{2\sqrt{\pi}}\xi^{-\frac{2}{15}}\exp\left\{\sum\limits_{s=0}^{n}\alpha_{s}\xi^{-s+1}\right\}
\end{split}
\end{equation}
and $\alpha_{s}$ is defined in \eqref{eq-facterization-alphas}.

Similarly when $\lambda\to\infty$ with $\arg\lambda\sim-\frac{\pi}{5}$, we have
\begin{eqnarray}\label{eq-Ai-Bi--pi/5}
\begin{cases}
\left(\frac{\zeta}{f(z)}\right)^{\frac{1}{4}}\Ai_{n}(\xi^{\frac{2}{3}}\zeta)=\left(\frac{\zeta}{f(z)}\right)^{\frac{1}{4}}\left(\frac{d\widehat{\zeta}}{d\zeta}\right)^{-\frac{1}{2}}\Ai\left(\xi^{\frac{2}{3}}\widehat{\zeta}\right)\sim c_{1}\frac{-1}{\sqrt{2}}\lambda^{-\frac{3}{4}}e^{-\frac{4}{5}\lambda^{\frac{5}{2}}},\\
\left(\frac{\zeta}{f(z)}\right)^{\frac{1}{4}}\Bi_{n}(\xi^{\frac{2}{3}}\zeta)=\left(\frac{\zeta}{f(z)}\right)^{\frac{1}{4}}\left(\frac{d\widehat{\zeta}}{d\zeta}\right)^{-\frac{1}{2}}\Bi\left(\xi^{\frac{2}{3}}\widehat{\zeta}\right)\sim -ic_{1}\frac{-1}{\sqrt{2}}\lambda^{-\frac{3}{4}}e^{-\frac{4}{5}\lambda^{\frac{5}{2}}}+2c_{2}\frac{1}{\sqrt{2}}\lambda^{-\frac{3}{4}}e^{\frac{4}{5}\lambda^{\frac{5}{2}}}
\end{cases}
\end{eqnarray}

Now, match these expansions with the canonical solution $\widehat{\Phi}_k$. From~\eqref{eq-canonical-solutions}, a direct calculation shows
\begin{equation}\label{eq-asym-Phi21-Phi22}
\left((\widehat{\Phi}_{k})_{21},(\widehat{\Phi}_{k})_{22}\right)\sim \left(\frac{1}{\sqrt{2}}\lambda^{-\frac{3}{4}}e^{\frac{4}{5}\lambda^{\frac{5}{2}}},\,
\frac{-1}{\sqrt{2}}\lambda^{-\frac{3}{4}}e^{-\frac{4}{5}\lambda^{\frac{5}{2}}}\right),
\qquad k\in\mathbb{Z}
\end{equation}
as $\lambda\to\infty$.
Substituting \eqref{eq-Ai-Bi-pi/5} into \eqref{eq-higher-approximation-ODE-1}, noting that $Y=\left(\frac{\zeta}{f(z)}\right)^{\frac{1}{4}}$ and $Y$ is the first component of $\widehat{\Phi}$,
and then comparing the corresponding result with \eqref{eq-asym-Phi21-Phi22},
we get
\begin{equation}\label{eq-Y-pi/5}
\begin{split}
W= &[C_{1}+r_{1,1}(\xi)]c_{1}(\widehat{\Phi}_{1})_{22}+[C_{2}+r_{2,1}(\xi)][ic_{1}(\widehat{\Phi}_{1})_{22}+2c_{2}(\widehat{\Phi}_{1})_{21}]\\
=&[C_{2}+r_{2,1}(\xi)]2c_{2}(\widehat{\Phi}_{1})_{21}+\{[C_{1}+r_{1,1}(\xi)]c_{1}+[C_{2}+r_{2,1}(\xi)]ic_{1}\}(\widehat{\Phi}_{1})_{22}
\end{split}
\end{equation}
with $\lambda\in\Omega_{1}$. Since the Stokes multipliers $s_{k}$'s depend on $\xi$,
hence we will only take the limit $\lambda\to\infty$
when we do the asymptotic matching between the Airy functions and $\Phi_{k}$'s. In a similar manner, we get
\begin{equation}\label{eq-Y--pi/5}
\begin{split}
W=[C_{2}+r_{2,0}(\xi)]2c_{2}(\widehat{\Phi}_{0})_{21}+\{[C_{1}+r_{1,0}(\xi)]c_{1}-[C_{2}+r_{2,0}(\xi)]ic_{1}\}(\widehat{\Phi}_{0})_{22}
\end{split}
\end{equation}
with $\lambda\in\Omega_{0}$.
Combining \eqref{eq-Y-pi/5} with \eqref{eq-Y--pi/5}, and noting that
$$\left((\widehat{\Phi}_{1})_{21},(\widehat{\Phi}_{1})_{22}\right)=\left((\widehat{\Phi}_{0})_{21},(\widehat{\Phi}_{0})_{22}\right)\left(\begin{matrix}1&0\\s_{0}&1\end{matrix}\right),$$
we conclude that $r_{2,0}(\xi)=r_{2,1}(\xi)$ and
\begin{equation}
s_{0}=-\frac{ic_{1}}{c_{2}}\left(1+\frac{r_{1,0}(\xi)-r_{1,1}(\xi)}{2(C_{2}+r_{2,0}(\xi))}\right).
\end{equation}
Finally, substituting~\eqref{eq-def-c1-c2} into this identity and recalling that $r_{1,k} = \mathcal{O}(\xi^{-2n})$ as $\xi \to +\infty$ for $k=1,2$ and arbitrary $n \in \mathbb{N}^{+}$, we obtain the desired asymptotic expression~\eqref{eq-s0-case-I}. \qed

\

\textbf{Proof of Theorem \ref{Thm-stokes-case-II}}

Referring to Figure~\ref{Figure-stokes-geometry}, when $A\in(-3/2^{2/3}, C_{0})$, we apply Lemma~\ref{lem-higher-order-approximation-ODE-1} to obtain the uniform asymptotic behavior of $W$ in a neighborhood of the turning point $z = z_1$ and along the adjacent Stokes curves $\gamma_1$ and $\gamma_2$. In contrast to Theorem~\ref{Thm-stokes-case-I}, we now compute the Stokes multiplier $s_1$ rather than $s_0$.

As $\lambda \to \infty$ with $\arg \lambda \sim \frac{3\pi}{5}$, we observe that $\arg \zeta \sim \pi$. Proceeding analogously to the derivation of~\eqref{eq-Ai-Bi-pi/5}, and using the asymptotic expansions of the Airy functions near $\arg z \sim \pi$ from~\eqref{eq-asym-Ai} and~\eqref{eq-asym-Bi}, we obtain
\begin{eqnarray}\label{eq-Ai-Bi-3pi/5}
\begin{cases}
\left(\frac{\zeta}{f(z)}\right)^{\frac{1}{4}}\Ai_{n}(\xi^{\frac{2}{3}}\zeta)=\left(\frac{\zeta}{f(z)}\right)^{\frac{1}{4}}\left(\frac{d\widehat{\zeta}}{d\zeta}\right)^{-\frac{1}{2}}\Ai\left(\xi^{\frac{2}{3}}\widehat{\zeta}\right)\sim c_{1}\frac{-1}{\sqrt{2}}\lambda^{-\frac{1}{4}}e^{-\frac{4}{5}\lambda^{\frac{5}{2}}}+ic_{2}\frac{1}{\sqrt{2}}\lambda^{-\frac{1}{4}}e^{\frac{4}{5}\lambda^{\frac{5}{2}}},\\
\left(\frac{\zeta}{f(z)}\right)^{\frac{1}{4}}\Bi_{n}(\xi^{\frac{2}{3}}\zeta)=\left(\frac{\zeta}{f(z)}\right)^{\frac{1}{4}}\left(\frac{d\widehat{\zeta}}{d\zeta}\right)^{-\frac{1}{2}}\Bi\left(\xi^{\frac{2}{3}}\widehat{\zeta}\right)\sim ic_{1}\frac{-1}{\sqrt{2}}\lambda^{-\frac{1}{4}}e^{-\frac{4}{5}\lambda^{\frac{5}{2}}}+c_{2}\frac{1}{\sqrt{2}}\lambda^{-\frac{1}{4}}e^{\frac{4}{5}\lambda^{\frac{5}{2}}}
\end{cases}
\end{eqnarray}
as $\lambda\to\infty$ (and $\zeta\to\infty$ accordingly). Substituting \eqref{eq-Ai-Bi-3pi/5} into \eqref{eq-higher-approximation-ODE-1}, noting that $Y=\left(\frac{\zeta}{f(z)}\right)^{\frac{1}{4}}$ and $Y$ is the first component of $\widehat{\Phi}$,
and then comparing the corresponding result with \eqref{eq-asym-Phi21-Phi22},
we get
\begin{equation}\label{eq-Y-3pi/5-C}
\begin{split}
W=&\{[C_{1}+r_{1,2}(\xi)]ic_{2}+[C_{2}+r_{2,2}(\xi)]c_{2}\}(\widehat{\Phi}_{2})_{21}\\
&+\{[C_{1}+r_{1,2}(\xi)]c_{1}+[C_{2}+r_{2,2}(\xi)]ic_{1}\}(\widehat{\Phi}_{2})_{22}
\end{split}
\end{equation}
$\lambda\in\Omega_{2}$.
Combining \eqref{eq-Y-pi/5} with \eqref{eq-Y-3pi/5-C}, and noting that
$$\left((\widehat{\Phi}_{2})_{21},(\widehat{\Phi}_{2})_{22}\right)=\left((\widehat{\Phi}_{1})_{21},(\widehat{\Phi}_{1})_{22}\right)\left(\begin{matrix}1&s_{1}\\0&1\end{matrix}\right),$$
we further obtain
\begin{equation}
r_{1,1}(\xi)-r_{1,2}(\xi)=i(r_{2,2}(\xi)-r_{2,1}(\xi))
\end{equation}
and
\begin{equation}
\begin{split}
s_{1}&=\frac{(C_{2}-iC_{1})c_{2}+(2r_{2,1}(\xi)-ir_{1,2}(\xi)-r_{2,2}(\xi))c_{2}}{(C_{1}+iC_{2})c_{1}+r_{1,2}(\xi)c_{1}+r_{2,2}(\xi)ic_{1}}\\
&=-\frac{ic_{2}}{c_{1}}+\frac{2(r_{2,1}(\xi)-r_{2,2}(\xi))c_{2}}{(C_{1}+iC_{2}+r_{1,2}(\xi)+ir_{2,2}(\xi))c_{1}}.
\end{split}
\end{equation}
Since the Stokes multipliers $s_{k}$'s depend on $\xi$,
hence we will only take the limit $\lambda\to\infty$
when we do the asymptotic matching between the Airy functions and $\Phi_{k}$'s.

By Lemma~\ref{lem-higher-order-approximation-ODE-1}, $r_{j,k}(\xi)=\mathcal{O}\left(\frac{|C_{1}|+|C_{2}|}{\xi^{n}}\right)$ as $\xi\to+\infty$ for $j=1,2$ and $k=1,2$. Hence,
\begin{equation}
s_{1}=i \exp\left\{\sum\limits_{s=0}^{2n}2\alpha_{s}(A_{0},B(\xi))\xi^{-s+1}\right\}\left(1+\mathcal{O}(\xi^{-2n})\right)
\end{equation}
as $\xi\to+\infty$.

The asymptotic expansion of $s_{-1}$ in \eqref{eq-stokes-full-asymptotic-case-II}
follows from the conjugate relation $s_{-1}=\overline{s_{1}}$, and the asymptotic expansions for other stokes multipliers follow from the constraint $s_{k}=i(1+s_{k+2}s_{k+3})$, along with the fact that $\Re\alpha_{0}>0$. \qed

\begin{remark}
Theorems~\ref{Thm-stokes-case-III} and~\ref{Thm-stokes-case-IV} can be proved by following the same strategy as in the proof of Theorem~\ref{Thm-stokes-case-II}. Specifically, the procedure for deriving the asymptotic expansion of the Stokes multiplier $s_{1}$ remains unchanged. However, when computing the asymptotic expansions of the remaining Stokes multipliers, it is important to take into account the behavior of the leading exponent $\alpha_{0}$. Namely, we have $\Re \alpha_{0} = 0$ when $A = C_{0}$, and $\Re \alpha_{0} < 0$ when $A> C_{0}$. These distinctions significantly affect the asymptotic contributions of the associated terms.
\end{remark}

\appendix

\section{Monodromy theory of PI}
\label{sec:AppA}

The Painlev\'e equations can be formulated as compatibility conditions of associated Lax pairs. In particular, the first Painlev\'e equation (PI) admits the following Lax pair representation (see \cite{Kapaev-Kitaev-1993}):
\begin{equation}\label{lax pair-I}
\left\{\begin{aligned}
\frac{\partial\Psi}{\partial\lambda}
&=\left\{(4\lambda^4+t+2y^2)\sigma_{3}-i(4y\lambda^2+t+2y^2)\sigma_{2}
-\left(2y_{t}\lambda+\frac{1}{2\lambda}\right)\sigma_{1}\right\}\Psi
:=\mathcal{A}(\lambda)\Psi,
\\
\frac{\partial\Psi}{\partial t}
&=\left\{\left(\lambda+\frac{y}{\lambda}\right)\sigma_{3}-\frac{iy}{\lambda}\sigma_{2}\right\}\Psi
:=\mathcal{B}(\lambda)\Psi,
\end{aligned}\right.
\end{equation}
where $y_{t}=\frac{dy}{dt}$ and
\[
\sigma_{1}=\begin{bmatrix}0&1\\1&0\end{bmatrix},\quad \sigma_{2}=\begin{bmatrix}0&-i\\i&0\end{bmatrix},\quad
\sigma_{3}=\begin{bmatrix}1&0\\0&-1\end{bmatrix}
\]
stands for the Pauli matrices.
The compatibility condition for this Lax pair,
\[
\frac{\partial^2 \Psi}{\partial t \, \partial \lambda} = \frac{\partial^2 \Psi}{\partial \lambda \, \partial t},
\]
leads to that \( y = y(t) \) statisfies the PI equation~\eqref{PI equation}.

Under the transformation
\begin{equation}\label{eq-transform-canonical solution}
\Phi(\lambda)=\lambda^{\frac{1}{4}\sigma_{3}}\frac{\sigma_{3}+\sigma_{1}}{\sqrt{2}}\Psi(\sqrt{\lambda}),
\end{equation}
the first equation in (\ref{lax pair-I}) becomes
\begin{equation}\label{eq-fold-Lax-pair}
\frac{\partial\Phi}{\partial\lambda}=\left(\begin{matrix}y_{t}&2\lambda^{2}+2y\lambda-t+2y^2\\2(\lambda-y)&-y_{t}\end{matrix}\right)\Phi.
\end{equation}
Following~\cite{Kapaev-Kitaev-1993} (see also~\cite{AAKapaev-2004}), the only singularity of Eq.~\eqref{eq-fold-Lax-pair} is the irregular singular point at \( \lambda = \infty \). There exist canonical solutions \( \Phi_k(\lambda) \), indexed by \( k \in \mathbb{Z} \), with the following asymptotic behavior:
\begin{equation}\label{eq-canonical-solutions}
\Phi_{k}(\lambda,t)
=\lambda^{\frac{1}{4}\sigma_{3}}\frac{\sigma_{3}+\sigma_{1}}{\sqrt{2}}
\left(I+\frac{\mathcal{H}}{\lambda}+\mathcal{O}\left(\frac{1}{\lambda^2}\right)\right)e^{(\frac{4}{5}\lambda^{\frac{5}{2}}+t\lambda^{\frac{1}{2}})\sigma_{3}},
\qquad\lambda\rightarrow\infty,\quad \lambda\in\Omega_{k},
\end{equation}
uniformly for all $t$ bounded away from the poles of the PI solution.  Here, the coefficient matrix is \[
\mathcal{H}=-(\frac{1}{2}y_{t}^2-2y^3-ty)\sigma_{3},
\]
and the canonical sectors are defined by
\[
\Omega_{k}=\left\{\lambda\in\mathbb{C}:~\arg \lambda\in \left(-\frac{3\pi}{5}+\frac{2k\pi}{5},\frac{\pi}{5}+\frac{2k\pi}{5}\right)\right\}, \qquad k\in\mathbb{Z}.
\]
These canonical solutions are related by Stokes matrices:
\begin{equation}\label{eq-Stokes-matrices}
\Phi_{k+1}=\Phi_{k}S_{k},\quad S_{2k-1}=\left(\begin{matrix}1&s_{2k-1}\\0&1\end{matrix}\right),\quad S_{2k}=\left(\begin{matrix}1&0\\s_{2k}&1\end{matrix}\right),
\end{equation}
where \( s_k \in \mathbb{C} \) are the \emph{Stokes multipliers}.
By the isomonodromy condition, the \( s_k \)'s are independent of both \( \lambda \) and \( t \).
The Stokes multipliers satisfy the following constraints:
\begin{equation}\label{eq-constraints-stokes-multipliers}
s_{k+5}=s_{k}\quad \text{and}\quad s_{k}=i(1+s_{k+2}s_{k+3}),
\qquad k\in\mathbb{Z}.
\end{equation}
Furthermore, regarding $s_{k}$'s as functions of $(t,y(t),y'(t))$, they satisfy the symmetry relation (see~\cite[Eq.~(13)]{AAKapaev-1988}):
\begin{equation}\label{eq-sk-s-k-relation}
s_{k}\left (t,y(t),y'(t)\right)=-\overline{s_{-k}\left (\bar{t},\overline{y(t)},\overline{y'(t)}\right )}, \qquad k\in\mathbb{Z},
\end{equation}
where $\bar{z}$ denotes the complex conjugate of $z$.
From Eq.~\eqref{eq-constraints-stokes-multipliers}, it follows that the full set of Stokes multipliers is determined by any two consecutive ones. Detailed derivations of the asymptotic expansion~\eqref{eq-canonical-solutions}, the Stokes matrices~\eqref{eq-Stokes-matrices}, and the constraint~\eqref{eq-constraints-stokes-multipliers}, along with further discussion of the associated Lax pairs, can be found in~\cite{FAS-2006}.

For further simplification, we introduce the transformation
\begin{equation}\label{eq-def-hat{Phi}}
\widehat{\Phi}(\lambda,t)=G(\lambda,t)\Phi(\lambda,t)
\end{equation}
where the matrix \( G(\lambda, t) \) is defined by
\begin{equation}\label{def-G(lambda,t)}
G(\lambda,t)
=\begin{bmatrix}0&1\\ 1&-\frac{1}{2}\left(-y_{t}+\frac{1}{2(\lambda-y)}\right)\end{bmatrix}
(\lambda-y)^{\frac{\sigma_{3}}{2}}.
\end{equation}
Under this transformation, the function \( \widehat{\Phi}(\lambda, t) \) satisfies the system
\begin{equation}\label{eq-system-hat-Phi}
\frac{d}{d\lambda}\widehat{\Phi}(\lambda,t)
=\begin{bmatrix}0&2\\V(\lambda,t)&0\end{bmatrix}\widehat{\Phi}(\lambda,t),
\end{equation}
where
\[
2V(\lambda,t)=y_{t}^{2}+4\lambda^{3}+2\lambda t-2y t-4y^{3}-\frac{y_{t}}{\lambda-y}+\frac{3}{4}\frac{1}{(\lambda-y)^2}, \qquad t\neq p
\]
with \( p \) is a pole of the PI solution.

Under the transformation~\eqref{eq-def-hat{Phi}}, the asymptotic approximation of \( \Phi(\lambda, t) \) given in~\eqref{eq-canonical-solutions} becomes
\begin{equation}\label{eq-canonical-solutions-hat-Phi}
\widehat{\Phi}_{k}(\lambda,t)
=\frac{\lambda^{-\frac{3}{4}\sigma_{3}}}{\sqrt{2}}
\begin{bmatrix}1&-1\\1&1\end{bmatrix}
\left(I+\mathcal{O}(\lambda^{-\frac{1}{2}})\right)
e^{(\frac{4}{5}\lambda^{\frac{5}{2}}+t\lambda^{\frac{1}{2}})\sigma_{3}}
\end{equation}
as \( \lambda \to \infty \) with \( \lambda \in \Omega_{k} \), for \( t \) bounded away from the poles of the PI solution.

%When \( t \to p \), where \( p \) is a pole, the approximation~\eqref{eq-canonical-solutions-hat-Phi} is no longer valid. Instead, following~\cite[Corollary A.8]{Bertola-Tovbis-2013}, the canonical solution \( \widehat{\Phi}_k(\lambda, t) \) satisfies the modified asymptotic form
%\begin{equation}\label{eq-canonical-solutions-near-pole}
%\widehat{\Phi}_{k}(\lambda,p)
%=\frac{\lambda^{-\frac{3}{4}\sigma_{3}}}{\sqrt{2}}
%\begin{bmatrix}-i&-i\\-i&i\end{bmatrix}
%\left(I+\mathcal{O}(\lambda^{-\frac{1}{2}})\right)
%e^{(\frac{4}{5}\lambda^{\frac{5}{2}}+p\lambda^{\frac{1}{2}})\sigma_{3}}
%\end{equation}
%as $\lambda\rightarrow\infty$ with $\lambda\in\Omega_{k}$.

\section*{Acknowledgements}
The work of Wen-Gao Long was partially supported by the National Natural Science Foundation of China [Grant No. 12401094], the Natural Science Foundation of Hunan Province [Grant No. 2024JJ5131] and the Outstanding Youth Fund of Hunan Provincial Department of Education [Grant No. 23B0454]. The work of Yu-Tian Li was supported in part by the National Natural Science Foundation of China
[Grant no. 12071394] and the Shenzhen Science and Technology Program [Grant No. 20220816165920001].

\end{document}